%--------------------------------------------------
%      
%     online size ramsey C_4,P_n
%
%--------------------------------------------------
\documentclass[reqno,12pt]{article}
\setlength{\textwidth}{19.0cm}
\setlength{\textheight}{23.0cm}
\setlength{\hoffset}{-2.6cm}
\setlength{\voffset}{-1.8cm}
\usepackage{amsmath}
\usepackage{amsthm}
\usepackage{amssymb}
\usepackage{authblk}
\usepackage{enumitem}
\usepackage{todonotes}
\usepackage{hyperref}
\usepackage{tabularx}
\usepackage{algorithm2e}
\usepackage{graphicx,tikz}
\usetikzlibrary{calc,math}
\usepackage{ifthen}
\usepackage{grafy}
\usepackage{listings}
\usepackage[polish,main=english]{babel}
\usepackage{multirow,boldline,makecell}  %makecell do lamania wiersza w polu tabeli
\makeatletter
\renewcommand{\p@enumii}{} %odnosnik do numeru subitem bez numeru nadrzednego item
\renewcommand{\p@enumiii}{}
\makeatother

\setcounter{secnumdepth}{4}
\usepackage[section]{placeins}
\newcommand{\subs}{\subseteq}
\newcommand{\setm}{\setminus}

\newtheorem{theorem}{Theorem} [section]
\newtheorem{lemma}[theorem]{Lemma}

\newtheorem{cor}[theorem]{Corollary}
\newtheorem{conjecture}[theorem]{Conjecture}
\def\N{\mathbb N}
\def\R{\tilde R}
\def\RR{\text{RR}}
\def\RRC{\text{RRC}}
\def\rr{\tilde{r}}
\def\sr{\hat{r}}

\begin{document}

\title{Online size Ramsey numbers: Path vs $C_4$}
\author{Grzegorz Adamski}
\author{Ma\l gorzata Bednarska-Bzd\c ega}
\affil{Faculty of Mathematics and CS, Adam Mickiewicz University, Pozna\'n, Poland}

\maketitle

\begin{abstract}
Given two graphs $G$ and $H$, a size Ramsey game is  
played on the edge set of $K_\N$. In every round, Builder selects an edge and Painter colours 
it red or blue.  Builder's goal is to force Painter to create a red copy of $G$ or a blue copy of $H$ as soon as possible. The online (size) Ramsey number $\rr(G,H)$ is the number of rounds in the game provided  Builder and Painter play optimally. 
We prove that $\rr(C_4,P_n)\le 2n-2$ for every $n\ge 8$.
The upper bound matches the lower bound obtained by J.~Cyman, T.~Dzido, J.~Lapinskas, and A.~Lo, so we get $\rr(C_4,P_n)=2n-2$ for $n\ge 8$.
Our proof for $n\le 13$ is computer assisted. 
The bound $\rr(C_4,P_n)\le 2n-2$ solves also the ``all cycles vs. $P_n$'' game for $n\ge 8$ -- it implies that it takes Builder $2n-2$ rounds to force Painter to create a blue path on $n$ vertices or any red cycle.   
\end{abstract}

\section{Introduction}

Let $G$ and $H$ be finite graphs. Consider the following game  $\R(G,H)$ played on the infinite board $K_\N$ (i.e. the board is a complete graph with the vertex set $\N$). In every round, Builder chooses a previously unselected edge of $K_\N$ and Painter colours it red or blue. The game ends when there is a red copy of $G$ or a blue copy of a $H$ at the board.  The goals of the players are opposite, Builder aims to finish the game as soon as possible. The game, for $H=G=K_n$, was introduced by Beck \cite{beck}. 

By $\rr(G,H)$ we denote the number of rounds in the game $\R(G, H)$, provided both players play optimally and call it the  online size Ramsey number. Let us emphasise that in the literature online size Ramsey numbers are called also online Ramsey numbers. The online size Ramsey numbers  $\rr(G, H)$ are less intensively studied than classic Ramsey numbers or size Ramsey numbers $\sr(G, H)$, i.e.~the minimum number of edges in a graph with the property that every two-colouring of its edges results in a red copy of $G$ or a blue copy of $H$. There are very few exact results for $\rr(G, H)$, even if every graph $G$, $H$ is a path or a cycle. It is known that $\rr(P_3,P_n)=\lceil 5(n-1)/4\rceil$ and $\rr(P_3,C_n)=\lceil 5n/4\rceil$ \cite{lo}. Nonetheless, there are quite a few papers where paths and cycles are involved, for example \cite{ga}, \cite{gryt}, \cite{ind}, \cite{lo}, \cite{ms}. There are also computer assisted results on selected graphs on up to $9$ vertices: \cite{pral}, \cite{pral1}, \cite{pral2}. 

In this paper we study the game $\R(C_4,P_n)$. As for small paths, it is known that: $\rr(C_4,P_3)=6$, $\rr(C_4,P_4)=8$ (\cite{lo}), $\rr(C_4,P_5)=9$ (\cite{dzido}), and $\rr(C_4,P_6)=11$ (\cite{ml}). In general, we have $\rr(C_4,P_n)\ge 2n-2$ for $n\ge 2$ and it follows from a stronger result by Cyman, Dzido, Lapinskas and Lo \cite{lo}, who proved that $\rr(C_k,H)\ge |V(H)|+|E(H)|-1$  for every $k\ge 3$ and any connected graph $H$. The best upper bound known so far for $n\ge 7$ is $\rr(C_4,P_n)\le 3n-5$ by  Dybizba\'nski, Dzido and Zakrzewska (\cite{dzido}). We improve the upper bound and this is the main result of our paper. 

\begin{theorem}\label{main}
$\rr(C_4,P_n)\le 2n-2$ for every $n\ge 8$. 
Furthermore, $\rr(C_4,P_7)\le 13$.
\end{theorem}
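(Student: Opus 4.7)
The plan is to exhibit an explicit Builder strategy that grows a blue path at an amortised cost of two rounds per new path vertex, using any red edges Painter concedes to threaten a red $C_4$. The cases $n\le 13$ are a finite check deferred to a computer search; the task for large $n$ is to describe an inductive/phase step whose analysis closes the bound $2n-2$.

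First I would set up the main \emph{phase}. Let $v$ denote the endpoint of the current blue path $P$, and let $F$ denote the set of vertices that have not yet been incident with any played edge. Builder plays $vw_1$ for a fresh $w_1\in F$: a blue response extends $P$ in one round and banks a surplus round for later, while a red response is met by a second probe $vw_2$ for another fresh $w_2\in F$, which again either extends $P$ (blue) or produces a two-edge red fan at $v$ into fresh vertices $w_1,w_2$.

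From a red fan Builder picks a fresh $z\in F$ and probes $w_1z$, then $w_2z$. If both are red, the four-cycle $vw_1zw_2v$ is a red $C_4$ and Builder wins; otherwise at least one probe is blue and the resulting blue edges are used either to extend $P$ immediately (if the blue edge can be attached at $v$'s predecessor on the path) or to set up a stronger fan for the next phase. A potential $\Phi=2\cdot|\text{blue path vertices}|-(\text{rounds played})$ tracks progress across phases, and the phase analysis is designed to keep $\Phi$ bounded below by a constant, so that after $2n-2$ rounds either $\Phi$ forces $|P|\ge n$ or a red $C_4$ has already been created.

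The main obstacle is controlling the interaction between phases: red edges at off-path vertices left over from earlier fans can be reused by Painter either to avoid closing a new $C_4$ or to muddle the fresh reservoir $F$. Builder must track which vertices carry red baggage and avoid routing new fans through them, and must also choose the attachment edges of newly created blue fragments so that no inherited red edge lets Painter escape the $C_4$ threat. For large $n$ the fresh reservoir dominates the bookkeeping and the amortisation closes cleanly; for $n\le 13$ the constant-size startup overhead is comparable to the whole budget $2n-2$, which is why the paper resolves those cases by computer search.
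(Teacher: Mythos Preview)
Your proposal has a genuine gap in the phase analysis. Trace the potential $\Phi$ through the ``both probes red'' branch: after $vw_1$ red, $vw_2$ red, $w_1z$ blue (say), you have spent three rounds and the blue path $P$ is unchanged, since the new blue edge $w_1z$ joins two fresh vertices neither of which lies on $P$. Thus $\Delta\Phi=-3$ in that phase; if instead $w_1z$ is red and $w_2z$ is blue, $\Delta\Phi=-4$. Your sentence ``the resulting blue edges are used either to extend $P$ immediately \ldots\ or to set up a stronger fan'' is exactly where the proof has to happen, and as written it does not: the detached blue edge $w_1z$ cannot be attached at $v$ (both $vw_1$ and $vw_2$ are already red), and nothing prevents Painter from repeating this pattern, driving $\Phi\to-\infty$. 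Counting disconnected blue components as ``path vertices'' only defers the problem to the merging step, where the same obstruction recurs. The budget $2n-2$ allows exactly $n-1$ red edges in total, so every red edge must eventually force a blue one; your local fans discard red edges that never get reused.

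The paper's strategy is structurally different from an incremental path-growing scheme. Builder first spends roughly $n-1$ rounds building a single red \emph{butterfly}: two centres $u_0,u_1$ joined by a short red path, each carrying a large red star into fresh vertices. Once this exists, every edge between a leaf of the $u_0$-wing and a leaf of the $u_1$-wing completes a red $P_4$ through $u_0u_1$, so Builder can \emph{force} a blue Hamiltonian path through the entire butterfly vertex set in one sweep (Lemma~\ref{motyle}). If Painter plays blue before the butterfly is complete, a detailed case analysis on the round of the first blue edge shows how to contract the resulting blue subpath to a single edge and reduce to a strictly smaller instance $\RR(C_4,P_{n-k},H')$, where $H'$ is one of a fixed finite list of short coloured paths (Lemma~\ref{sciag}); this is what makes the induction close, with the base cases $8\le n\le 13$ handled by computer. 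The tight budget is met because every red edge lands in the butterfly and is later reused to force a blue edge---there are no wasted rounds. Your local-fan approach has no such global repository of reusable red structure, which is why the amortisation you sketch cannot be completed.
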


In view of the above mentioned lower bound $\rr(C_4,P_n)\ge 2n-2$ as well as the known bound $\rr(C_4,P_7)\ge 13$  (\cite{dzido}), our theorem implies the following new exact results. 

\begin{cor}
If $n\ge 8$, then $\rr(C_4,P_n)=2n-2$. 
Furthermore, $\rr(C_4,P_7)=13$.
\end{cor}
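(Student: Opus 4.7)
The plan is to exhibit an explicit Builder strategy terminating within $2n-2$ rounds for $n\ge 8$ (and within $13$ rounds for $n=7$). Builder maintains a longest blue path $P = v_0 v_1 \cdots v_k$ together with a record of red edges from the endpoints $v_0, v_k$ to a pool of reserve vertices disjoint from $P$. I would organise play into \emph{phases}, each of at most two rounds, such that the length of $P$ increases by at least one per phase; if such a scheme is realised, $n-1$ phases suffice, giving the desired bound $2(n-1) = 2n-2$.

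The $C_4$-forcing leverage comes from the fact that whenever two vertices share two common red neighbours, a red $C_4$ already exists. In a typical phase Builder first plays an edge $v_k u$ with $u$ fresh. A blue answer ends the phase with $P$ lengthened. For a red answer, Builder picks a second edge designed so that both of Painter's possible responses advance the invariant: either the edge completes a red $C_4$ together with previously recorded red edges incident to $u$ or to $v_k$, or the only safe answer is blue and that blue edge extends (possibly after a local rerouting through the reserve) the current longest blue path. Crucially, red edges are never wasted — they are stored in the reserve so that later phases can reuse them as half of a $C_4$ threat, which is what lets a ``costly'' phase amortise against earlier ones.

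The main obstacle I anticipate is the bookkeeping of reserve vertices and rerouting: one has to verify that each forced blue edge genuinely lengthens the longest blue path rather than merely producing a detached blue subgraph, and that the red threats cultivated at $v_0$ do not interact destructively with those at $v_k$. Because $2n-2$ leaves no slack for wasted moves, the scheme has to exploit every red answer productively, and the case analysis of Painter's moves inside a phase is where the work really lies. For small $n$ the amortised scheme does not leave enough room to absorb initial and terminal transients cleanly, which is why the theorem resorts to a computer-assisted verification for $7 \le n \le 13$. For $n \ge 14$ I would then structure the argument inductively, assuming a strategy for $P_{n-1}$ in $2(n-1)-2$ rounds equipped with a slightly strengthened terminal invariant (e.g., a preserved red neighbour of an endpoint) and showing that two additional rounds suffice to extend the path to $P_n$, thereby bootstrapping from the computer-checked base cases.
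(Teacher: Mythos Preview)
Your proposal is a plan rather than a proof, and the plan has a genuine gap. First, the corollary is an equality: it needs the cited lower bounds $\rr(C_4,P_n)\ge 2n-2$ of \cite{lo} and $\rr(C_4,P_7)\ge 13$ of \cite{dzido}, which you never invoke. More seriously, the two-rounds-per-extension scheme for the upper bound does not work. After Builder plays $v_ku$ with $u$ fresh and Painter answers red, there is no $C_4$ threat available on the second move of the phase: a red $C_4$ through $v_ku$ would require two further red edges $v_kx$, $uy$ and then the edge $xy$, but $u$ has no other red incidences yet. Stored red edges at earlier endpoints do not help either, because the current endpoint changes after every successful extension and a $C_4$ threat requires \emph{common} red neighbours of the two vertices you are trying to join. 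Nothing in your description forces a blue edge extending $P$ within two rounds; this is exactly why the straightforward strategies in the literature only reached $3n-5$.

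The paper's argument is structurally different and does not grow a blue path incrementally. Builder first lets red edges accumulate into a \emph{butterfly}: a central red edge $u_0u_1$ with many pendant red edges at each centre. Every pair consisting of a $u_0$-pendant and a $u_1$-pendant then lies on a red $P_4$, so the chord between them is forced blue; Lemma~\ref{motyle} shows these forced blue edges can be threaded into a Hamiltonian blue path through the butterfly. The induction step is not ``add two rounds to the $P_{n-1}$ game'' but a contraction (Lemma~\ref{sciag}): a long blue subpath with few extra incidences is collapsed to a single blue edge, reducing to $\RR(C_4,P_{n-k},H')$ where $H'$ is one of six specific short coloured paths, and those six starting positions for $7\le n\le 13$ form the computer-verified base (Lemma~\ref{warpocz}). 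Your proposed inductive invariant, a single preserved red neighbour of an endpoint, is far weaker than this list of admissible $H'$ and would not let the step close.
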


Theorem \ref{main} solves also another problem related to games with paths and cycles. Schudrich \cite{ms} studied a Builder-Painter game on $K_\N$ in which Builder wants to force Painter to create a red cycle or a blue path $P_n$ as soon as possible, while Painter wants to keep the red graph acyclic and the blue graph free of $P_n$ as long as possible. Let $\rr({\mathcal C},P_n)$ denote the number of rounds provided Builder and Painter play optimally. 
Schudrich proved that $\rr({\mathcal C}, P_n)\le 2.5 n+5$ for every $n$.  Theorem \ref{main} improves this bound for $n\ge 8$ and, in view of the bound $\rr({\mathcal C}, P_n)\ge 2n-2$ obtained by the authors of \cite{lo}, we get the following result.

\begin{cor}
If $n\ge 8$, then $\rr({\mathcal C}, P_n)= 2n-2$. 
\end{cor}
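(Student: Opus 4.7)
The plan is to deduce the equality directly from Theorem~\ref{main} combined with the lower bound $\rr(\mathcal{C},P_n)\ge 2n-2$ already attributed to the authors of \cite{lo} and recorded just before the statement. Since the lower bound is quoted, the only real content is establishing $\rr(\mathcal{C},P_n)\le 2n-2$, and this should follow from a simple monotonicity observation: relaxing Builder's winning condition from ``red $C_4$ or blue $P_n$'' to ``red cycle or blue $P_n$'' can only help Builder (since $C_4\in\mathcal{C}$) and hurt Painter (she now must avoid a larger family of red subgraphs).

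Concretely, I would let $S$ be an optimal Builder strategy for $\R(C_4,P_n)$ guaranteeing a win in at most $t:=\rr(C_4,P_n)$ rounds, and play $S$ in $\R(\mathcal{C},P_n)$. Consider any Painter response. Two cases: either at some round $r\le t$ Painter produces a red cycle (possibly not $C_4$) or a blue $P_n$, in which case the $\mathcal{C}$-game already ends in Builder's favour by round $r$; or else the transcript through round $t$ contains no red cycle at all, hence in particular no red $C_4$, hence is a legal Painter transcript in the $\R(C_4,P_n)$ game against $S$. But $S$ forces a red $C_4$ or blue $P_n$ by round $t$, and a red $C_4$ is a red cycle, so the $\mathcal{C}$-game ends in at most $t$ rounds either way. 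Thus $\rr(\mathcal{C},P_n)\le\rr(C_4,P_n)$, and Theorem~\ref{main} gives the upper bound $2n-2$ for every $n\ge 8$.

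Combining with the lower bound $\rr(\mathcal{C},P_n)\ge 2n-2$ from \cite{lo} yields equality, which is exactly the claim. There is no genuine obstacle here: the only step requiring a moment of care is the legality argument above, namely that cutting a $\R(\mathcal{C},P_n)$ play at the first moment a red cycle appears (if any) either finishes the game immediately or leaves a transcript consistent with $\R(C_4,P_n)$, so that Builder's $C_4$-strategy remains well-defined and effective. All the nontrivial combinatorial work has already been done in Theorem~\ref{main}.
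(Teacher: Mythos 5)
Your proposal is correct and follows exactly the route the paper takes: the upper bound comes from Theorem \ref{main} via the observation that a Builder strategy forcing a red $C_4$ or blue $P_n$ a fortiori forces a red cycle or blue $P_n$, and the lower bound is quoted from \cite{lo}. The paper states this monotonicity only implicitly in one sentence of the introduction, whereas you spell out the transcript-legality argument, but the content is the same.
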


It is interesting that the corresponding size Ramsey number $\sr({\mathcal C}, P_n)$ asymptotically does not differ very much from $\rr({\mathcal C}, P_n)$ in sense of the multiplicative constants. More precisely, in view of Theorem \ref{main} and the bounds on $\sr({\mathcal C}, P_n)$ proved by Bal and Schudrich \cite{bal}, we have
$$
1.03+o(1)\le \frac{\sr({\mathcal C}, P_n)}{\rr({\mathcal C}, P_n)}\le 1.98 +o(1).
$$
As the authors of \cite{bal} note, the upper bound $\sr({\mathcal C}, P_n)< 3.947 n +O(1)$ has a computer assisted proof. 

Theorem \ref{main} implies that in sense of the multiplicative constant the numbers $\rr(C_4,P_n)=2n-2$ are separated from $\rr(C_{2k+1},P_n)=2n-2$ for any fixed $k$. We know that Painter can avoid any odd red cycle and a blue $P_n$ for more than $2.6n-4$ rounds \cite{ga}. It is not surprising that it is easier to avoid an odd cycle than an even cycle and we suspect that for Builder every game $\R(C_{2k},P_n)$ is easier than, say, $\R(C_3,P_n)$. Therefore we conjecture that asymptotically the numbers $\rr(C_{2k},P_n)$ do not differ much from $\rr(C_{4},P_n)$. More precisely, we pose the following conjecture. 

\begin{conjecture}
$\rr(C_{2k},P_n)=2n+o(n)$ for every fixed $k\ge 3$. 
\end{conjecture}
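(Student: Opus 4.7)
The plan is to prove the upper bound $\rr(C_{2k},P_n) \le 2n + o(n)$, since the matching lower bound $\rr(C_{2k},P_n) \ge 2n-2$ is already provided by the Cyman--Dzido--Lapinskas--Lo general inequality cited above. The case $k=2$ is Theorem~\ref{main}, so the interesting range is $k \ge 3$, and by the paper's own remarks one should expect this to be strictly harder than the $C_4$ case rather than easier.

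The natural starting point is to adapt the Builder strategy underlying Theorem~\ref{main}. That strategy achieves an amortized cost of essentially two edges per new blue-path vertex: each ``extension step'' uses a small local gadget that forces Painter either to lengthen the blue path or to complete a red $C_4$. For general $k$, I would aim to replace this $C_4$-gadget by a $C_{2k}$-gadget that still pays $2 + o(1)$ edges per new path vertex in amortized cost. The key external tool I would call on is the Bondy--Simonovits theorem: any graph on $s$ vertices with $\Omega(s^{1+1/k})$ edges contains a copy of $C_{2k}$.

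Concretely, I would fix a slowly growing block size $s = s(n) = \omega(1)$ with $s = o(n)$, and partition the game into roughly $n/s$ macro-rounds, each intended to extend the blue path by $s$ vertices. Inside a macro-round, Builder would present Painter with a structured host graph on a set of ``fresh'' vertices attached to the current endpoint of the blue path, of total size $(2+o(1))s$ edges. Blue edges inside this host graph should furnish enough structure (a long blue path in a suitable dense bipartite-like subgraph) to lengthen the global blue path by $s$, while a dense accumulation of red edges in the same host graph should, via Bondy--Simonovits, immediately yield a red $C_{2k}$. Summed over macro-rounds, this gives $\rr(C_{2k},P_n) \le (2+o(1))n$, matching the conjectured bound.

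The main obstacle, and where the real difficulty lies, is designing the per-macro-round protocol so that Painter is genuinely trapped between the two alternatives using only $(2+o(1))s$ edges per block. In the $C_4$ case the forbidden red configuration is extremely local: any four vertices with a specific red edge pattern already contain $C_4$. By contrast, a red $C_{2k}$ requires a coordinated cyclic configuration on $2k$ vertices, and Painter may plausibly scatter red edges across many ``sub-blocks'' so that no single one becomes dense enough for Bondy--Simonovits to bite. Overcoming this may need a two-layer strategy, with an outer scheduling that forces red edges to concentrate on a single sub-block, and an inner extremal argument that extracts $C_{2k}$ from that sub-block, together with a careful bookkeeping of how partially extended blue paths from successive macro-rounds are spliced together without wasting edges on reconnection.
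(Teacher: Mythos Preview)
The statement you are attempting to prove is labelled a \emph{Conjecture} in the paper, not a theorem; the authors explicitly pose it as open and give no proof. So there is no ``paper's own proof'' to compare against, and your proposal must be evaluated as a would-be resolution of an open problem.

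As such, what you have written is a research outline, not a proof, and the outline has a quantitative gap at its core. You want each macro-round to use only $(2+o(1))s$ edges on a block of roughly $s$ fresh vertices, and you plan to use the Bondy--Simonovits theorem to extract a red $C_{2k}$ whenever the red edges become ``dense''. But Bondy--Simonovits requires $\Omega(s^{1+1/k})$ edges on $s$ vertices to guarantee a $C_{2k}$; with a total budget of $(2+o(1))s$ edges per block, the red graph has at most $O(s)$ edges, which is far below the extremal threshold for every $k\ge 2$ once $s$ is large. So even if Painter obligingly concentrates all red edges in a single block, Bondy--Simonovits simply does not fire. This is not a matter of Painter ``scattering'' red edges cleverly, as you suggest in your final paragraph; the mismatch is already present in the ideal scenario.

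What the paper's $C_4$ argument actually exploits is not an extremal density bound but very specific local configurations (the butterfly gadgets of Lemmas~\ref{motyle} and~\ref{malo}) in which a \emph{single} red edge completes a red $C_4$. Any genuine attack on the conjecture for $k\ge 3$ would presumably need analogous handcrafted gadgets that force Painter's next red edge to close a red $C_{2k}$, together with some induction/contraction machinery like Lemma~\ref{sciag}. Your proposal does not supply such gadgets, and the global extremal tool you invoke cannot substitute for them at the $2n+o(n)$ edge budget.
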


The proof of Theorem \ref{main} is based on induction. Our inductive argument, technically quite complicated, holds for $n\ge 14$ and two next sections are devoted mainly to that argument. 
For $7\le n\le 13$ we analyse the games with the computer help. The idea of the algorithm is described in Section \ref{algorithm} and the C++ code is enclosed in the appendix.

\section{Preliminaries}\label{prelim}

For a graph $H$ we put $v(H)=|V(H)|$, $e(H)=|E(H)|$. If $A\subs V(H)$, then we denote by $N_H(A)$ the set of all edges of $H$ with at least one end in the set $A$ and we put $\deg_H(u)=|N_H(\{u\}|$.

We say that a graph $H$ is coloured if every its edge is blue or red. A graph is red (or blue) if every its edge is red (blue). 
Let $c_1,c_2,\ldots,c_k\in\{b,r\}$ be consecutive edge colours of a coloured path $P_{k+1}$. 
Then the coloured path is called a $c_1c_2\ldots c_k$-path. 
We say a vertex of $K_\N$ is free at a moment of the game if it is not incident to any edge selected by Builder so far.

Given $n\ge 2$, and a coloured graph $H$ (it may be empty), consider the following auxiliary game $\RR(C_4,P_n,H)$. 
The board of the game is 
$K_\N$, with exactly $e(H)$ edges coloured, inducing a copy of $H$ in $K_\N$. The rules of selecting and colouring edges
by Builder and Painter are the same as in the standard game, however, Painter is not allowed to colour an edge red if that 
would create a red $C_4$. Builder wins if $e(H)\le 2n-2$ and after at most $2n-2-e(H)$ rounds 
there is  a blue $P_n$ at the board and exactly $n-1$ edges are blue; otherwise Painter is the winner. 
The game ends after $2n-2-e(H)$ rounds or sooner; we assume that  it stops before the round $2n-2-e(H)$ 
if more than $n-1$ edges are blue or there is a blue $P_n$.   If $H$ is an empty graph then we write 
$\RR(C_4,P_n)$ instead of $\RR(C_4,P_n,H)$.  

It is clear that the first part of Theorem \ref{main} follows immediately from the following theorem.

 \begin{theorem}\label{main2}
Suppose that $G$ is one of the coloured graphs: an empty graph, a $b$-path, a $br$-path, a $brb$-path, a $brr$-path, or a $brrb$-path.
Then for every $n\ge 8$ Builder has a winning strategy in $\RR(C_4,P_n,G)$. 
\end{theorem}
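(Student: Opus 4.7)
The plan is strong induction on $n$. The base cases, namely $8\le n\le 13$, are to be verified by the computer search outlined in Section~\ref{algorithm}: for each such $n$ the game tree has depth $2n-2-e(G)\le 24$, and after quotienting by the natural isomorphisms of the currently coloured board both players have only a bounded number of relevant moves at each node, so a full minimax search is feasible. The reason Theorem~\ref{main2} lists six starting configurations rather than only the empty one is that the inductive step reduces each of them to one of the same six; the computer must therefore verify all six simultaneously, not just the empty board.

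For the inductive step $n\ge 14$, I would assume Builder wins $\RR(C_4,P_{n'},G')$ for every $8\le n'<n$ and every of the six allowed configurations $G'$. For each starting configuration $G$, the goal is to exhibit a short Builder strategy that, after $k$ rounds played, forces the board to contain a copy of one of the six configurations $G'$ embedded as an initial sub-path of a longer blue sub-path of the edges played so far; the remainder of the game is then a subgame equivalent to $\RR(C_4,P_{n-\ell},G')$, where $\ell$ counts the edges by which the blue target path has already been extended beyond $G'$. The resulting bookkeeping constraint is
\[
e(G)+k+\bigl(2(n-\ell)-2-e(G')\bigr)\le 2n-2,
\]
equivalently $k+e(G)-e(G')\le 2\ell$, so Builder's gadgets must gain roughly one blue path-edge for every two rounds played.

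The main mechanism by which Builder can force blue edges is the no-red-$C_4$ rule: whenever Builder plays an edge whose red colouring would close a red $C_4$ together with three already existing red edges, Painter is compelled to paint it blue. Builder's strategy therefore alternates between probing free vertices adjacent to the endpoints of the current blue subpath — hoping that Painter paints blue and the path simply lengthens — and, whenever Painter declines by painting red, immediately playing further edges around that red edge to set up a red-$C_4$ threat that forces a well-placed blue edge on a subsequent round. The six listed configurations are exactly those one expects to appear after a two- or three-round probe of this form, which is why closure of the induction fixes precisely this list.

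The principal obstacle is the branching case analysis. Builder must handle every Painter response in each of the six starting configurations, and in every branch both the accounting inequality above must be satisfied and the residual board must embed one of the six configurations as an initial sub-path of the growing blue path; moreover, along the way Painter must never be allowed to create a blue $P_n$ prematurely with more than $n-1$ blue edges overall. I expect the tightest cases to be the $brb$-path, where the two existing blue edges are non-adjacent so the reduction must either connect them with a blue edge Painter will typically refuse to supply or else detach and extend just one of the blue edges, and the $brrb$-path, where four edges are already spent and the probe has almost no slack in the accounting inequality.
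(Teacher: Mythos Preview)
Your overall skeleton---strong induction on $n$, computer-verified base cases $8\le n\le 13$ for all six starting configurations, and a reduction to a smaller $\RR(C_4,P_{n'},G')$ for $n\ge 14$---matches the paper. But the proposal is missing the two technical devices that actually make the induction close, and without them the ``short Builder strategy'' you describe cannot be carried out.

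First, you have no plan for the branch where Painter colours \emph{everything} red. Your description (``probing free vertices\ldots\ setting up a red-$C_4$ threat'') is too local: probing one endpoint at a time does not generate enough red $P_3$'s to force a long blue path. The paper's solution is structural: Builder spends the first $n-1$ rounds building a specific red $(s,s)$-\emph{butterfly} (two red stars sharing a $P_7$), and shows (Lemma~\ref{motyle}) that this red graph has a Hamiltonian path \emph{every} edge of which closes a red $C_4$, so the whole blue $P_n$ is then forced in one sweep. This all-red branch uses no induction at all; your plan does not cover it.

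Second, your reduction mechanism is not the one that works. You propose to recognise one of the six configurations $G'$ ``as an initial sub-path of a longer blue sub-path'' and then play the residual game from $G'$. But the board after the opening rounds is not just $G'$ with a blue tail: there are stray red edges everywhere, and they are \emph{attached to vertices you still need to use}. The paper's key tool is the contraction Lemma~\ref{sciag}: if Builder has forced a blue path $u_0u_1\ldots u_{k+1}$ whose \emph{internal} vertices $u_1,\ldots,u_k$ carry at most $2k+1$ incident edges in total (and none of them blue outside the path), then one may contract that path to a single blue edge $u_0u_{k+1}$ and pass to $\RR(C_4,P_{n-k},H')$. The $2k+1$ degree bound is exactly what makes your accounting inequality hold; your plan never mentions tracking degrees of internal vertices, and without that bound the reduction is unsound. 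In practice the paper combines the butterfly with the contraction: when Painter's first blue edge appears at round $t$, the red butterfly built so far is used (via Lemma~\ref{motyle}) to force a long blue path whose internal degree is controlled, and then that path is contracted to land on one of the six small configurations.

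Finally, your guess that the $brb$- and $brrb$-cases are the hard ones is off: the paper handles all five nonempty $G$ in a few lines by pointing back to subcases of the empty-$G$ analysis. The real work is the empty case, stratified by the round in which the first blue edge appears.
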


We will prove the above theorem in the next section. In the remaining part of this section, we prepare tools for the inductive argument of the proof. We begin by a lemma need for the induction base cases. 

\begin{lemma}\label{warpocz}
Suppose that $H$ is one of the coloured graphs: a $b$-path, a $br$-path, a $brb$-path, a $brr$-path, or a $brrb$-path.
Then for every $7\le n\le 13$ Builder has a winning strategy in $\RR(C_4,P_n,H)$.
Furthermore,  for every $8\le n\le 13$ Builder has a winning strategy in $\RR(C_4,P_n)$.
\end{lemma}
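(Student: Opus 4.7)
The plan is to prove the lemma by finite computer search, running one backward-induction minimax routine on each of the $41$ specified base games (five coloured starting graphs for each $n\in\{7,\ldots,13\}$, plus the empty graph for each $n\in\{8,\ldots,13\}$). Since every play of $\RR(C_4,P_n,H)$ lasts at most $2n-2-e(H)\le 24$ rounds, the associated game tree is finite, so a recursive evaluation decides each instance.

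The core subroutine would be a Boolean function $\mathrm{BuilderWins}(F,k)$ that, given the currently coloured board graph $F$ together with the number of remaining rounds $k$, returns \emph{true} iff Builder has a winning strategy from $(F,k)$. The base cases implement the terminal conditions from the definition of $\RR(C_4,P_n,H)$: $F$ is an immediate Builder win if it contains a blue $P_n$ on exactly $n-1$ blue edges, and an immediate Painter win if either $k=0$ or more than $n-1$ edges of $F$ are already blue. In the recursive case the routine returns the disjunction over Builder's candidate edges $e$ of the conjunction over Painter's legal colourings of $e$, where the red colouring is disallowed whenever it would complete a red $C_4$.

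To keep the enumeration finite and the search tractable, I would exploit two symmetries of the board. First, since $K_\N$ has infinitely many interchangeable free vertices, Builder need only consider edges whose endpoints lie in the currently touched vertex set together with at most two representative fresh vertices, and Painter's responses may be analysed up to the same symmetry. Second, positions should be memoized by a canonical form of the coloured graph together with the values of $k$ and $n$, so that any two isomorphic coloured states are solved once. I expect the principal obstacle to be managing the combinatorial explosion for $n=13$: even with isomorphism reduction the raw state space is large, and in practice one needs a fast canonical labelling for small edge-two-coloured graphs, together with game-specific pruning of Builder moves that are obviously dominated (for instance, moves that add an isolated edge when an extension of the longest blue path is available). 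The full C++ implementation realising these ideas is described in Section~\ref{algorithm} and included in the appendix; the lemma then follows from the computed verdict \emph{true} on each of the $41$ base instances.
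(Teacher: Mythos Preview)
Your approach is essentially the paper's: a finite game-tree search with isomorphism-based memoization, run on each base instance. The main difference is in how the search is made tractable. The paper does not do a full minimax on $\RR(C_4,P_n,H)$; instead it analyses an auxiliary game $\RRC(C_4,P_n,H,n+1,2n-2)$ in which Builder is \emph{handicapped} --- the coloured graph may use at most $n+1$ vertices in total, must remain connected, and every edge Builder offers must, together with all current blue edges, fit inside a copy of $P_n$ --- while Painter is allowed to create a red $C_4$ (and lose). A Builder win under these restrictions immediately yields a Builder win in $\RR(C_4,P_n,H)$, and the last constraint automatically guarantees that when a blue $P_n$ appears there are exactly $n-1$ blue edges. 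These one-sided cuts replace your open-ended ``touched vertices plus two fresh'' enumeration and your ad hoc dominance heuristics, and are what bring the running time down to a few minutes; your plan is sound in principle, but the naive search you sketch would almost certainly be infeasible at $n=13$ without restrictions of this kind.
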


The proof of this lemma is based on computer calculations. We present the algorithm and its analysis in Section \ref{algorithm}. 

For the analysis of the game $\RR(C_4,P_n,H)$ we need some additional definitions. 
We say that Builder in $\RR(C_4,P_n,H)$ forces a blue path of length $k\ge 1$ if for $k$ consecutive rounds he selects
edges such that must be coloured blue according to the rules of the game and the $k$ edges induce a path of length $k$. 

Suppose that $n,k\in\N$, $H$ is a coloured graph and contains a blue path $u_0u_1\ldots u_{k+1}$. We say that $H'$ is the graph obtained by contraction of the path $u_0u_1\ldots u_{k+1}$ to the edge $u_0u_{k+1}$
if $V(H')=V(H)\setminus \{u_1,u_2,\ldots,u_k\}$, $E(H')=(E(H)\setminus N_H(\{u_1,u_2,\ldots,u_k\}))\cup\{u_0u_{k+1}\}$, 
the edge $u_0u_{k+1}$ is blue (if it was red in $H$, we change its colour) and the colours of the remaining edges in $H'$ are the same as in $H$. 
For $x,y\in V(H)$ we define $\delta_H(x,y)=1$ if $x,y$ are adjacent in $H$ and $\delta_H(u_0,u_{k+1})=0$ otherwise.

The following lemma is the key realisation of the inductive argument used in the next section. 

\begin{lemma}\label{sciag}
Suppose that $n,k\in\N$, $H$ is a coloured graph such that it contains a blue path $P=u_0u_1\ldots u_{k+1}$ and $u_0u_{k+1}\notin E(H)$ or $u_0u_{k+1}$ is red. 
Furthermore, suppose that no edge from the set $N_H(\{u_1,u_2,\ldots,u_k\})\setminus E(P)$ is blue and
$$|N_H(\{u_1,u_2,\ldots,u_k\})|+\delta_H(u_0,u_{k+1})\le 2k+1.$$
Let $H'$ be the graph obtained from $H$ by contraction of the path $P$ to the edge $u_0u_{k+1}$. 
If Builder has a winning strategy in $\RR(C_4,P_{n-k},H')$, then he has a winning strategy in $\RR(C_4,P_n,H)$.
\end{lemma}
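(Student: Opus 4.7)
The plan is to let Builder play $\RR(C_4,P_n,H)$ by running in parallel a virtual game $\RR(C_4,P_{n-k},H')$ in which he uses his assumed winning strategy $S'$. Because free vertices are interchangeable, I may assume (by relabelling the fresh vertices used by $S'$) that the moves of $S'$ never involve any of $u_1,\ldots,u_k$; equivalently, $S'$ selects edges in $E(K_\N)\setminus E(H')$ that are non-incident with $u_1,\ldots,u_k$. Such an edge is automatically also in $E(K_\N)\setminus E(H)$, since $E(H)\setminus E(H')\subseteq N_H(\{u_1,\ldots,u_k\})$. In each round Builder plays that edge in the first game, then reports Painter's colour back to the virtual game. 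The reported colour is a legal virtual move: blue is always legal, and for red note that the red subgraph of the virtual game is at every moment a subgraph of the red subgraph of the first game (the contraction only removes edges from the red set, never adds any), so a red $C_4$ in the virtual game would already be present in the first game, contradicting the legality of Painter's move there.

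The accounting is then routine. The hypothesis that $u_0u_{k+1}$ is not blue in $H$ and that the only blue edges incident to $\{u_1,\ldots,u_k\}$ are the $k+1$ edges of $P$ gives $|B(H)|=|B(H')|+k$, where $B(\cdot)$ denotes the blue edge set; since blue additions during play are synchronised, the offset $k$ is preserved throughout. A direct computation also shows
\[
(2n-2-e(H))-(2(n-k)-2-e(H'))=2k+1-|N_H(\{u_1,\ldots,u_k\})|-\delta_H(u_0,u_{k+1})\ge 0,
\]
so Builder has at least as many rounds in the first game as $S'$ needs in the virtual one.

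The heart of the reduction is matching the stopping conditions. When $S'$ wins at some round $t^*$, the virtual blue graph has exactly $n-k-1$ edges and contains a blue $P_{n-k}$, so it \emph{equals} this $P_{n-k}$; since the edge $u_0u_{k+1}$ is blue in $H'$, it belongs to that $P_{n-k}$. Expanding it back to the path $P$ in the first game turns the $P_{n-k}$ into a blue $P_n$ with exactly $n-1$ blue edges, and Builder wins. The main obstacle is ruling out premature termination of the first game before round $t^*$. Because Builder never plays an edge incident with $u_1,\ldots,u_k$, those vertices have blue degree exactly $2$ throughout, so every blue path in the first-game blue graph either avoids $u_1,\ldots,u_k$ altogether or traverses all of $P$ as a contiguous subpath. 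A $P_n$ of the first kind would embed into the virtual blue graph, which prior to round $t^*$ has at most $n-k-1<n-1$ edges, an impossibility; a $P_n$ of the second kind, after contracting $P$ to the edge $u_0u_{k+1}$, becomes a blue $P_{n-k}$ in the virtual game, so the virtual game has already finished. The blue-count stopping thresholds are synchronised by the same offset $k$, which concludes the reduction.
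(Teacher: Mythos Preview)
Your proof is correct and follows essentially the same simulation idea as the paper: Builder plays in $\RR(C_4,P_n,H)$ by mirroring a winning strategy for $\RR(C_4,P_{n-k},H')$, never touching the interior vertices $u_1,\ldots,u_k$, and the blue-edge counts differ by the fixed offset $k$ throughout, while the edge inequality $|N_H(\{u_1,\ldots,u_k\})|+\delta_H(u_0,u_{k+1})\le 2k+1$ ensures the round budgets match. You are in fact more careful than the paper on two points it leaves implicit: you explicitly verify that Painter's real colour is a \emph{legal} virtual response (because the virtual red graph stays a subgraph of the real one), and you rule out premature termination of the real game by analysing how a putative early blue $P_n$ would interact with the internally-degree-$2$ vertices $u_1,\ldots,u_k$.
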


\begin{proof}
Let $H$, $P$ and $H'$ be as in the assumption of the lemma. 
Suppose that $K_\N$ contains the coloured graph $H$ and $K'_\N$ is the complete graph arising after the 
contraction of the path $P=u_0u_1\ldots u_{k+1}$ to the edge $u_0u_{k+1}$. Consider two games:
$\RR(C_4,P_n,H)$ played on $K_\N$ and $\RR(C_4,P_{n-k},H')$ played on $K'_\N$. 
We assumed that no edge from the set $N_H(\{u_1,u_2,\ldots,u_k\})\setminus E(P)$ is blue and $u_0u_k$ is not blue at start of 
the game $\RR(C_4,P_n,H)$ so at the beginning of both games the sets $B_0$ and $B'_0$ of all blue edges in 
$\RR(C_4,P_n,H)$ and $\RR(C_4,P_{n-k},H')$ respectively, satisfy the condition $B_0=(B'_0\setm\{u_0,u_{k+1}\})\cup E(P)$
and $|B_0|=|B'_0|-1+k+1=|B'_0|+k$.

Builder in $\RR(C_4,P_n,H)$ uses Builder's winning strategy in $\RR(C_4,P_{n-k},H')$ 
by selecting the same edge as  Builder in $\RR(C_4,P_{n-k},H')$ in every round. 
Observe that this way Builder in $\RR(C_4,P_n,H)$ never selects $u_0u_{k+1}$ nor any edge incident to any of 
vertices $u_1,u_2,\ldots,u_k$.  
We assumed that Builder wins $\RR(C_4,P_{n-k},H')$ so after at most $2(n-k-1)-e(H')$ rounds of this game 
the set $B'$ of all blue edges has exactly $n-k-1$ elements and there is a blue path $P'$ with $n-k-1$ edges. 
Thus $E(P')=B'$ and it contains the blue edge $u_0u_{k+1}$. 
In the corresponding game $\RR(C_4,P_n,H)$ after at most $2(n-k-1)-e(H')$ rounds
the set $B$ of blue edges at the board differs from $B'$ in the same way as $B_0$ differs from $B'_0$ so 
$B=(B'\setm\{u_0,u_{k+1}\})\cup E(P)$ and $|B|=|B'|+k$. We infer that $|B|=n-1$ and the blue edges  $(E(P')\setm\{u_0,u_{k+1}\})\cup E(P)$ form a path on $n$ vertices. 

Finally, let us estimate the number of rounds. If follows from the contraction definition that 
$$e(H')=e(H)-|N_H(\{u_1,u_2,\ldots,u_k\})|+1-\delta_H(u_0,u_{k+1}).$$
Thus, in view of the assumptions of the lemma, we verify that
$$e(H')\ge e(H)-(2k+1)+1=e(H)-2k.$$
We know that the number of rounds in both games is the same and not greater than 
$2(n-k-1)-e(H')$, so the game $\RR(C_4,P_n,H)$ lasts not more than
$$2(n-k-1)-e(H')\le 2(n-k-1)-e(H)+2k=2n-2-e(H)$$
rounds.
We conclude that Builder wins the game $\RR(C_4,P_n,H)$.
 \end{proof}

We need two more lemmata, about graphs called butterflies. 
The following graph will be called an $(s,t)$-butterfly with centers $(u_0,u_1)$.

\begin{center}
 \begin{tikzpicture}
\jezk{6}{3}{4};
\node [below] at (a3) {$u_0$};
\node [below] at (a4) {$u_1$};
\node [above] at (b1) {$x_1$};
\node [above] at (b2) {$y_1$};
\wierz{x2}{$(b1)-(0.5,0)$};
\wierz{x3}{$(b1)-(1.0,0)$};
\wierz{xs}{$(b1)-(2.0,0)$};
\node [above] at (x2) {$x_2$};
\node [above] at (x3) {$x_3$};
\node [above] at (xs) {$x_s$};
\node at ($(x3)-(0.5,0)$) {$\ldots$};
\krk{a3}{x2};\krk{a3}{x3};\krk{a3}{xs};
\wierz{y2}{$(b2)+(0.5,0)$};
\wierz{y3}{$(b2)+(1.0,0)$};
\wierz{yt}{$(b2)+(2.0,0)$};
\node [above] at (y2) {$y_2$};
\node [above] at (y3) {$y_3$};
\node [above] at (yt) {$y_t$};
\node at ($(y3)+(0.5,0)$) {$\ldots$};
\krk{a4}{y2};\krk{a4}{y3};\krk{a4}{yt};
 \end{tikzpicture}
\end{center}

The subgraph of the butterfly, consisting of edges $u_0x_i$ with $i=1,2,\ldots,s$ and the subgraph consisting of edges $u_1y_i$ with $i=1,2,\ldots,t$, are called the $u_0$-wing and $u_1$-wing of the butterfly, respectively.

\begin{lemma}\label{motyle}
Let $s\ge 1$ and $H$ be a red $(s,s)$-butterfly  with centers $(u_0,u_1)$. 
Then:
\begin{enumerate}[label=(\roman*)]
\item \label{bb}
 Builder in $\RR(C_4,P_n,H)$ can force a blue path on the vertex set $V(H)$, with ends $u_0,u_1$.
\item \label{br}
 If $s\ge 2$, Builder in $\RR(C_4,P_n,H)$ can force a blue path on the vertex set $V(H)\setm\{u_1\}$,
with ends $u_0,y$ for a vertex $y$ of the $u_1$-wing. 
\item \label{rr}
If $s\ge 2$, Builder in $\RR(C_4,P_n,H)$ can force a blue path on the vertex set $V(H)\setm\{u_0,u_1\}$,
with ends $x,y$ for a vertex $x$ of the $u_0$-wing and a vertex $y$ of the $u_1$-wing. 
\end{enumerate}
\end{lemma}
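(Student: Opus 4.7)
My plan is to base the whole proof on two elementary forcing facts about any edge newly played after the red $(s,s)$-butterfly $H$ is on the board. First, every edge $x_iy_j$ must be coloured blue, because a red $x_iy_j$ would close the red $4$-cycle $u_0\,x_i\,y_j\,u_1\,u_0$; call this fact \textbf{(F1)}. Second, once some edge $u_0y_{j_0}$ is red, any new $u_0y_{j'}$ with $j'\neq j_0$ is forced blue via the red $C_4$ $u_0\,y_{j'}\,u_1\,y_{j_0}\,u_0$, so at most one $u_0y_j$ can ever be red (and symmetrically at most one $u_1x_i$ can be red); call this \textbf{(F2)}. Moreover, once any $u_1x_{i_0}$ is red, every new $u_0y_j$ is forced blue by $u_0\,y_j\,u_1\,x_{i_0}\,u_0$, and symmetrically for $u_1x_i$ after a red $u_0y_{j_0}$.

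Part (iii) then follows directly from (F1): Builder plays a zig-zag $x_{i_1}y_{j_1},\,y_{j_1}x_{i_2},\,x_{i_2}y_{j_2},\,\ldots$ that visits every $x_i$ and every $y_j$ exactly once, and each of the $2s-1$ edges is forced blue. For part (i) with $s\ge 2$, I would have Builder first play the four probing edges $u_0y_1,\,u_0y_2,\,u_1x_1,\,u_1x_2$; by (F2) at least one $u_0y_a$ and at least one $u_1x_b$ becomes blue. Builder then completes the alternating Hamiltonian path
\[
u_0,\,y_a,\,x_{\sigma_1},\,y_{\pi_1},\,x_{\sigma_2},\,\ldots,\,x_b,\,u_1
\]
in $V(H)$ by playing the $2s-1$ interior $xy$-edges, each forced blue by (F1). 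The degenerate case $s=1$ needs separate attention since neither wing offers the pigeonhole room exploited in (F2); there Builder must arrange a short auxiliary cascade that eventually secures both boundary edges $u_0y_1$ and $u_1x_1$ blue.

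For part (ii), any Hamilton path in $V(H)\setminus\{u_1\}$ from $u_0$ to some $y$ necessarily contains a ``wing-internal'' edge of the form $x_ix_j$ (or, in a mirrored route, $y_iy_j$). Builder forces such an edge blue by first playing $u_1x_1$ and $u_1x_2$: if, say, $u_1x_1$ is red, then $x_1x_j$ (for any $j\neq 1$) is forced blue by the red $C_4$ $x_1\,x_j\,u_0\,u_1\,x_1$. When Painter colours both $u_1x_?$ edges blue, Builder pivots to the $y$-side, probing $u_0y_1$ and $u_0y_2$; (F2) pins one of them red and then a $y_iy_j$ edge is forced blue via the analogous $y_i\,y_j\,u_1\,u_0\,y_i$ cycle. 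The remaining edges of the path are $xy$-edges forced blue by (F1), and the alternating structure is threaded around the forced wing-internal bridge.

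The hard part is the careful symmetry-breaking during the probing phase: the concrete target path is not fully specified until Painter has responded to Builder's probes, so for each branch of Painter's play Builder must exhibit a Hamilton route through $V(H)$ (or $V(H)\setminus\{u_1\}$, or $V(H)\setminus\{u_0,u_1\}$) whose interior edges are all either already known blue or $xy$-edges that will be forced blue by (F1). The most delicate cases are $s=1$ in part (i), where (F2) has no room to apply and auxiliary edges must generate the needed red $C_4$ threats, and the branches of part (ii) where only one wing can supply the bridge; keeping the total number of rounds within the budget of the game $\RR(C_4,P_n,H)$ while covering every Painter response is where the bulk of the technical bookkeeping sits.
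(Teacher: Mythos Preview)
Your argument rests on a misreading of the $(s,s)$-butterfly. In the paper the butterfly is \emph{not} the double star on $\{u_0,u_1,x_1,\ldots,x_s,y_1,\ldots,y_s\}$; it additionally carries four outer vertices $u_2,u_3,u_4,u_5$ forming the red path $u_4u_2u_0u_1u_3u_5$. Hence $|V(H)|=2s+6$, not $2s+2$, and every one of your three constructions simply omits $u_2,u_3,u_4,u_5$. In particular your zig-zag for part~(iii) visits only $2s$ of the $2s+4$ vertices of $V(H)\setminus\{u_0,u_1\}$, so it is not the Hamiltonian path the lemma asks for.

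These four extra vertices are not a nuisance to be patched in; they are precisely what makes the lemma provable without any probing. The ends $u_5$ and $u_4$ sit at red distance $3$ from $u_0$ and $u_1$ respectively (via $u_5u_3u_1u_0$ and $u_4u_2u_0u_1$), so the single edges $u_5u_0$ and $u_4u_1$ are forced blue outright. This is what your (F2) cannot deliver: in the actual butterfly no edge $u_0y_j$ is forced blue by the starting configuration, and your probing rounds---where Painter is free to answer red---violate the paper's definition of ``force a blue path'' (every selected edge must be blue) and cost extra moves that the exact round counts in the subsequent lemma cannot absorb. The paper's proof instead shows that the graph $G$ of all forced-blue edges on $V(H)\setminus\{u_0,u_1\}$ is $K_{s+2,s+2}$ minus a $P_4$ (with parts $\{x_1,\ldots,x_s,u_2,u_5\}$ and $\{y_1,\ldots,y_s,u_3,u_4\}$), verifies it is Hamilton-laceable for $s\ge2$, and appends the forced-blue edges $u_5u_0$ and $u_4u_1$ to reach the centres.
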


\begin{proof}
Let us consider the following red $(s,s)$-butterfly $H$, with $s\ge 1$.

\begin{center}
 \begin{tikzpicture}
\jez{6}{3}{4};
\node [below] at (a3) {$u_0$};
\node [below] at (a4) {$u_1$};
\node [below] at (a2) {$u_2$};
\node [below] at (a5) {$u_3$};
\node [below] at (a1) {$u_4$};
\node [below] at (a6) {$u_5$};
\node [above] at (b1) {$x_1$};
\node [above] at (b2) {$y_1$};
\wierz{x2}{$(b1)-(0.5,0)$};
\wierz{x3}{$(b1)-(1.0,0)$};
\wierz{xs}{$(b1)-(2.0,0)$};
\node [above] at (x2) {$x_2$};
\node [above] at (x3) {$x_3$};
\node [above] at (xs) {$x_s$};
\node at ($(x3)-(0.5,0)$) {$\ldots$};
\kr{a3}{x2};\kr{a3}{x3};\kr{a3}{xs};
\wierz{y2}{$(b2)+(0.5,0)$};
\wierz{y3}{$(b2)+(1.0,0)$};
\wierz{yt}{$(b2)+(2.0,0)$};
\node [above] at (y2) {$y_2$};
\node [above] at (y3) {$y_3$};
\node [above] at (yt) {$y_s$};
\node at ($(y3)+(0.5,0)$) {$\ldots$};
\kr{a4}{y2};\kr{a4}{y3};\kr{a4}{yt};
 \end{tikzpicture}
\end{center}

Let $V_1=\{x_1,x_2,...x_s,u_2,u_5\}$ and $V_2=\{y_1,y_2,...y_s,u_3,u_4\}$. Consider (uncoloured) graph $G$ on the vertex set $V(H)\setminus\{u_0,u_1\}$ such that two vertices $w,w'$ are adjacent in $G$ if and only if there is a path of length 3 in $H$, with ends $w,w'$. This is not hard to observe that $G$ is a copy of the graph $K_{s+2,s+2}-P_4$ such that the sets  $V_1$ and $V_2$ are the parts of its bipartition. $G$ is obtained from the complete bipartite graph $K_{s+2,s+2}$ by deleting three edges of the path  $u_2u_4u_5u_3$.

\begin{figure}[h]	
\centering	
 \begin{tikzpicture}	
\wierz{x1}{$(0,0)$};	
\node [below] at (x1) {$x_1$};	
\wierz{y1}{$(0,1)$};	
\node [above] at (y1) {$y_1$};	
\node at ($(0.5,0)$) {$\ldots$};	
\node at ($(0.5,1)$) {$\ldots$};	
\wierz{xs}{$(1,0)$};	
\node [below] at (xs) {$x_s$};	
\wierz{ys}{$(1,1)$};	
\node [above] at (ys) {$y_s$};	
\wierz{u2}{$(2,0)$};	
\node [below] at (u2) {$u_2$};	
\wierz{u3}{$(2,1)$};	
\node [above] at (u3) {$u_3$};	
\wierz{u5}{$(3,0)$};	
\node [below] at (u5) {$u_5$};	
\wierz{u4}{$(3,1)$};	
\node [above] at (u4) {$u_4$};	
\wierz{u1}{$(4,0)$};	
\node [below] at (u1) {$u_1$};	
\wierz{u0}{$(4,1)$};	
\node [above] at (u0) {$u_0$};	
\krk{x1}{y1};	
\krk{x1}{ys};	
\krk{x1}{u3};	
\krk{x1}{u4};	
\krk{xs}{y1};	
\krk{xs}{ys};	
\krk{xs}{u3};	
\krk{xs}{u4};	
\krk{u2}{y1};	
\krk{u2}{ys};	
\krk{u2}{u3};	
\krk{u5}{y1};	
\krk{u5}{ys};	
\krk{u5}{u0};	
\krk{u1}{u4};	
%\node [below] at (a2) {$u_1$};	
%\node [above] at (b1) {$x_1$};	
%\node [above] at (b2) {$y_1$};	
%	
%\wierz{x2}{$(b1)-(0.5,0)$};	
%\wierz{x3}{$(b1)-(1.0,0)$};	
%\wierz{xs}{$(b1)-(2.0,0)$};	
%\node [above] at (x2) {$x_2$};	
%\node [above] at (x3) {$x_3$};	
%\node [above] at (xs) {$x_s$};	
%\node at ($(x3)-(0.5,0)$) {$\ldots$};	
%\krk{a3}{x2};\krk{a3}{x3};\krk{a3}{xs};	
%	
%\wierz{y2}{$(b2)+(0.5,0)$};	
%\wierz{y3}{$(b2)+(1.0,0)$};	
%\wierz{yt}{$(b2)+(2.0,0)$};	
%\node [above] at (y2) {$y_2$};	
%\node [above] at (y3) {$y_3$};	
%\node [above] at (yt) {$y_t$};	
%\node at ($(y3)+(0.5,0)$) {$\ldots$};	
%\krk{a4}{y2};\krk{a4}{y3};\krk{a4}{yt};	
 \end{tikzpicture}	
\caption{Let $J$ be the graph of all edges that Builder can force to be blue in $H$. Lemma \ref{motyle} is equivalent to fact that if $s\ge s_0$, then $X$ has a hamiltonian path from $a$ to $b$ where $(X,a,b,s_0)\in \{(J,u_0,u_1,1),$ $(J-\{u_1\},u_0,y_i,2),(J-\{u_0,u_1\},x_i,y_j,2)\}$.}	
\label{tab:H3}	
\end{figure}
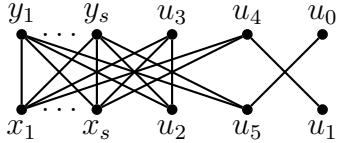	
One can verify that if $s\ge 2$, then for every $w_1\in V_1, w_2\in V_2$ there is a hamiltonian path with ends $w_1, w_2$ in $G=K_{s+2,s+2}-P_4$ (in other words, $G$ is Hamilton-laceable). Furthermore, if $s=1$, then there is a hamiltonian path with ends $u_4, u_5$ in $G$.

Note that every edge $e\in E(G)$, if coloured red, creates a red $C_4$ in $H\cup\{e\}$.
Thus if Builder selects all edges of a hamiltonian path $P\subseteq G$ in the game $\RR(C_4, P_n, H)$, then he forces a blue path on the vertex set $V(H)\setminus\{u_0,u_1\}$. 

It follows from the above observations that
Builder can force a blue path on the vertex set $V(H)\setminus\{u_0,u_1\}$, with ends $u_4,u_5$ and, if $s\ge 2$, then he can force a blue path on the vertex set $V(H)\setminus\{u_0,u_1\}$, with any ends 
$w_1\in V_1, w_2\in V_2$. We will use this fact in all three parts of the lemma.

Part \ref{rr} is an immediate consequence of the above observation and the fact that vertices $x_i$ and $y_i$ are in different bipartition sets.   

In order to prove \ref{bb}, we consider $s\ge 1$ and a blue path $P$ on the vertex set $V(H)\setminus\{u_0,u_1\}$, with ends $u_4,u_5$, forced by Builder. Then Builder selects edges $u_5u_0$ and $u_4u_1$. Painter has to colour them blue, otherwise a red $C_4$ is created. Then $P$ extended by $u_5u_0$, $u_4u_1$ forms a blue path on the vertex set $V(H)$, with ends $u_0,u_1$.

In order to prove \ref{br}, we consider $s\ge 2$ and a blue path $P$ on the vertex set $V(H)\setminus\{u_0,u_1\}$, with ends $u_5\in V_1$ and $y_s\in V_2$, forced by Builder. Then Builder selects the edge $u_5u_0$ and Painter has to colour it blue.
Thereby a blue path on the vertex set $V(H)\setminus\{u_1\}$ is formed, with ends $u_0,y_s$.

\end{proof}

\begin{lemma}\label{malo}
Let $s\ge 2$, $s'\in\{s,s+1\}$ and $H$ be a coloured $(s',s)$-butterfly with centers $(c_1,c_2)$. 
Suppose that every wing has at least two red edges, exactly one or two edges in the $c_1$-wing are blue
and the $c_2$-wing has at most as many blue edges as the $c_1$-wing.    
Furthermore, if every wing has two blue edges, then $s'=s$.
Then Builder has a winning strategy in $\RR(C_4,P_{v(H)},H)$.
\end{lemma}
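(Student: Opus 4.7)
I would extend the blue-Hamilton-path-via-$C_4$-forcing strategy of Lemma \ref{motyle} to handle the blue wing edges of $H$. Let $X_R \subseteq \{x_1, \ldots, x_{s'}\}$ and $Y_R \subseteq \{y_1, \ldots, y_s\}$ be the red-attached wing leaves; by hypothesis $|X_R|, |Y_R| \ge 2$. Builder commits in advance to a blue Hamiltonian path $P^*$ on $V(H)$ that (i) contains every blue edge of $H$, and (ii) realises each remaining edge in one round, with $C_4$-avoidance forcing Painter to colour it blue. For an $(s', s)$-butterfly one has $v(H) = s + s' + 6$ and $e(H) = s + s' + 5$, so the round-budget $2v(H) - 2 - e(H) = s + s' + 5$ matches the total required blue count $v(H) - 1 = s + s' + 5$; hence Builder's ``slack'' of allowed red Painter moves is exactly $b_1 + b_2$.

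\textbf{Internalising blue-attached leaves.} A blue wing edge $c_1 x_a$ isolates $x_a$: its only edge in $H$ is the now-blue $c_1 x_a$, so no single-round $C_4$-forcing creates a second blue neighbour at $x_a$. Thus $x_a$ must either be an endpoint of $P^*$ or be \emph{internalised} by a two-round construction: Builder first selects $x_a u_2$, and if Painter colours it red (using one slack round) Builder then selects $x_a x_i$ for some $x_i \in X_R$, which Painter is forced to colour blue by the red $C_4$ $x_a u_2 c_1 x_i x_a$. (If Painter instead colours $x_a u_2$ blue, no slack is used and $u_2$ plays the role of the second blue neighbour.) A symmetric internalisation is available at $c_2$. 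A case analysis on $(b_1, b_2) \in \{(1,0), (1,1), (2,0), (2,1), (2,2)\}$ then specifies the shape of $P^*$: blue-attached leaves are placed as endpoints of $P^*$ when possible and internalised when not, and the interior of $P^*$ becomes a Hamiltonian-path-with-prescribed-endpoints problem in a reduced version of the auxiliary bipartite graph $F = K_{s'+2, s+2} - P_4$ from Lemma \ref{motyle} (with centers reintroduced via the forced edges $c_1 u_5$, $c_2 u_4$). This reduced graph is Hamilton-laceable whenever both parts are large enough, which is ensured by $|X_R|, |Y_R| \ge 2$.

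\textbf{Main obstacle: case $(2,2)$.} The hardest case is $(2, 2)$, where the full slack $b_1 + b_2 = 4$ is consumed by one internalisation on each side, leaving no further flexibility. The remaining Hamiltonian path in the interior must connect the two internalisation partners $x_i \in V_1$ and $y_j \in V_2$ through a graph of type $K_{s', s} - P_4$. A cross-bipartite Hamiltonian path in such a bipartite graph exists only when the two parts are balanced, which forces $s' = s$; this is precisely the side-condition in the lemma. In the lighter cases the imbalance $|s' - s| \le 1$ is absorbed by choosing endpoints and internalisation partners differently -- for instance, in $(2, 0)$ with $s' = s$ Builder internalises \emph{both} blue leaves of the $c_1$-wing, which is still within the slack of $2$ and rebalances the bipartite count. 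The main technical work is therefore the case-by-case verification that the reduced bipartite graph $K_{m, n} - P_4$ is Hamilton-laceable (or admits the required prescribed-end Hamiltonian path) for the small values of $m, n$ that arise; this is routine given the hypotheses $s \ge 2$, $s' \in \{s, s+1\}$ and $|X_R|, |Y_R| \ge 2$, and a direct arithmetic check confirms the round budget in each case.
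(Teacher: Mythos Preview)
Your sketch is essentially the paper's proof: both reduce to forcing a blue Hamiltonian path through the butterfly via $C_4$-threats, both spend at most two rounds per blue-attached leaf to hook it into the path, and both organise the argument by the pair $(b_1,b_2)$ together with the imbalance $s'-s$. Your slack computation and your explanation of why the side-condition $s'=s$ is needed exactly when $(b_1,b_2)=(2,2)$ match the paper's case~(ii) analysis.

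One point worth tightening: your internalisation move ``try $x_a u_2$, then $x_a x_i$'' works for a single leaf, but if you have to internalise \emph{two} leaves on the same wing (your own example $(2,0)$ with $s'=s$) you cannot reuse $u_2$---if Painter colours both first tries blue you get the $4$-cycle $x_a c_1 x_b u_2$ in blue and no Hamiltonian path exists. The paper sidesteps this by making \emph{both} tries red-wing vertices: play $b_1 x$; if red, play $b_1 y$ (forced blue via $b_1 x c y$), so in either outcome the new blue neighbour of $b_1$ is a wing vertex, never $u_2$. This keeps the auxiliary bipartite problem uniform across Painter's responses and avoids the extra branching. With that adjustment your five $(b_1,b_2)$-cases, refined by $s'-s$, line up with the paper's cases (i)--(v), and the ``routine'' verification you defer is precisely the explicit round-count the paper carries out in each case.
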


\begin{proof} 

Given $u\in V(H)$, let $\deg_r(u)$ and $\deg_b(u)$  be the number of red and blue edges of $H$, respectively, incident to $u$. Note that $\deg_r(u)+\deg_b(u)=\deg_H(u)$. It is not hard to verify that if $H$ fulfills the assumption of the lemma, then $\deg_H(c_1)=s'+2$, $\deg_H(c_2)=s+2$, and it satisfies one
of the following conditions.
\begin{enumerate}[label=(\roman*)]
\item \label{malobb}
$\deg_b(c_1)=1, \deg_b(c_2)\le 1$ and $\deg_r(c_1)=\deg_r(c_2)$; 
\item \label{malorr}
$\deg_b(c_1)=2, \deg_b(c_2)\in \{1,2\}$ and $\deg_r(c_1)=\deg_r(c_2)$; 
\item \label{malorb}
$\deg_b(c_1)\in\{1,2\}, \deg_b(c_2)\le 1$ and $\deg_r(c_1)=\deg_r(c_2)-1$;
 \item \label{malobb2}
$\deg_b(c_1)=\deg_b(c_2)=1$ and $\deg_r(c_1)=\deg_r(c_2)+1$;
\item \label{malorb2}
$\deg_b(c_1)=2, \deg_b(c_2)=0$ and $\deg(c_1)=\deg(c_2)$.
\end{enumerate}

In order to verify that the above cases cover all possible values of $s'-s=\deg(c_1)-\deg(c_2)$, $\deg_b(c_1)$, $\deg_b(c_2)$, we summarize them in Table \ref{tab:cases}.

\begin{table}[h]
\centering
\begin{tabular}{l | l | l | l}
$s'-s$ & $\deg_b(c_1)$ & $\deg_b(c_2)$ & case\\
\hline
0&1&0& \ref{malorb}\\
0&1&1& \ref{malobb}\\
0&2&0& \ref{malorb2}\\
0&2&1& \ref{malorb}\\
0&2&2& \ref{malorr} \\
1&1&0& \ref{malobb} \\
1&1&1& \ref{malobb2}\\
1&2&0& \ref{malorb}\\
1&2&1& \ref{malorr}
\end{tabular}
\caption{All possible values of $s'-s,\deg_b(c_1),\deg_b(c_2)$ in Lemma \ref{malo}.}
\label{tab:cases}
\end{table}

We consider every case separately. In every case we denote by $P(c_i)$ the path (of length at most two, possibly trivial) 
induced by all blue edges incident to $c_i$, for $i=1,2$ and $a_i,b_i$ denote the ends of $P(c_i)$. 
We assume that $b_1\neq c_1$ and, if $P(c_2)$ is nontrivial,  then $b_2\neq c_2$.  Let $v(H)=n$. 
Our goal is to present Builder's strategy such that after at most $2n-2-e(H)=n-1$ rounds of $\RR(C_4,P_{v(H)},H)$
there is a blue path on $n$ vertices and it contains all blue edges of the board.
By $B$ we denote the red butterfly spanned by all red edges of $H$.

In case \ref{malobb} we apply Lemma \ref{motyle}\ref{bb} to the butterfly $B$ so Builder in $\RR(C_4,P_{n},H)$, 
as Builder in $\RR(C_4,P_{v(B)},B)$, 
can force a blue path $P$ with $V(P)=V(B)$, with ends $c_1,c_2$. 
This path, extended by $P(c_1)$ and $P(c_2)$ (both of length at most one) forms a blue path on $n$ vertices
and clearly contains all blue edges of the board. Builder achieves that goal within $e(P)\le e(H)=n-1$ rounds
so he wins the game.

In case \ref{malorr} let $x_i,y_i$ be any two vertices of the $c_i$-wing of the butterfly $B$, for $i=1,2$. 
Builder starts with selecting the edge $b_1x_1$ and, if Painter colours  it red, then Builder selects
$b_1y_1$ and it must be coloured blue. So after at most two rounds exactly one of the edges $b_1x_1$,
$b_1y_1$ is blue. Builder continues in the same way  with edges $b_2x_2$, $b_2y_2$ and obtains
another blue edge. Without loss of generality, we assume that edges $b_1y_1$ and $b_2y_2$ are blue.
At most 4 rounds are made in the game so far.  
Then we apply Lemma \ref{motyle}\ref{rr} to the butterfly $B$. Hence Builder in $\RR(C_4,P_{n},H)$, 
as Builder in $\RR(C_4,P_{v(B)},B)$, 
can force a blue path $P$ with $V(P)=V(B)\setm\{c_1,c_2\}$, with ends $y_1,y_2$. 
The edge set 
$$E(P(c_1))\cup\{b_1y_1\}\cup E(P)\cup\{b_2y_2\}\cup E(P(c_2))$$
forms a blue path on the vertex set $V(H)$ and this path contains all blue edges of the board. 
Builder achieves that goal within at most 
$$4+e(P)=3+v(B)-2=2+e(B)=2+e(H)-e(P(c_1))-e(P(c_2))\le e(H)=n-1$$ 
rounds so he wins the game.
 
Consider case \ref{malorb}.
Let $x,y$ be any two vertices of the $c_2$-wing of the butterfly $B$. 
Builder starts with selecting the edge $b_1x$ and, if Painter colours  it red, then Builder selects
$b_1y$ and it must be coloured blue. So after at most two rounds exactly one of the edges $b_1x$,
$b_1y$ is blue and we assume that $b_1y$ is blue.   Let $B'=B\setm\{c_2y\}$. Then $B'$
is an $(s,s)$-butterfly with $s\ge 2$. 
Based on Lemma \ref{motyle}\ref{br},  Builder in $\RR(C_4,P_{v(B')},B')$ 
-- and hence also Builder in $\RR(C_4,P_{n},H)$ -- can force a blue path $P$ with $V(P)=V(B')\setm\{c_1\}$, 
with ends $c_2,z$, with some vertex $z$ of the $c_1$-wing. 
Builder forces such a path, then selects $yz$. Painter has to colour it blue since otherwise a red cycle $c_1c_2yz$ 
would have appeared. Observe that  the edge set $E(P(c_2))\cup E(P)\cup \{zy, yb_1\}\cup E(P(c_1))$ 
forms a blue path on the vertex set $V(H)$ and this path contains all blue edges of the board. 
Builder achieves that goal within at most 
$$2+e(P)+1=2+v(B')-1=2+e(B')=2+e(H)-1-e(P(c_1))-e(P(c_2))\le e(H)=n-1$$ 
rounds so he wins the game.

The analysis in case \ref{malobb2} is similar to the previous one so we omit it.

We proceed to the last case. Here $P(c_1)$ is a $bb$-path with ends $a_1,b_1\neq c_1$ and $P(c_2)$ is trivial.
Furthermore, the butterfly $B$ spanned by all red edges of $H$ has its
$c_2$-wing bigger by two than its $c_1$-wing.
The strategy of Builder is similar to the one in case \ref{malorb}. 
Builder begins by obtaining a blue edge $b_1y$ for a vertex $y$ of the $c_2$-wing of the butterfly $B$
and it takes him at most two rounds. Since in case \ref{malorb2} the $c_2$-wing has at least four edges, 
Builder can get another blue edge $a_1x$ for a vertex $x\neq y$ of the $c_2$-wing, within 
at most two rounds. 
Let $B'=B\setm\{c_2x, c_2y\}$. Then $B'$ is an $(s,s)$-butterfly with $s\ge 2$. 
We apply Lemma \ref{motyle}\ref{br} and Builder in $\RR(C_4,P_{n},H)$ forces 
a blue path $P$ with $V(P)=V(B')\setm\{c_1\}$, 
with ends $c_2,z$, with some vertex $z$ of the $c_1$-wing. 
Then Builder selects $yz$ and Painter has to colour it blue. 
Now the edge set $E(P)\cup \{zy, yb_1\}\cup E(P(c_1))\cup \{a_1x\}$ 
forms a blue path on the vertex set $V(H)$ and this path contains all blue edges of the board. 
Builder achieves that goal within at most 
$$4+e(P)+1=4+e(B')=4+e(H)-2-e(P(c_1))= e(H)=n-1$$ rounds. 

In all five cases Builder wins the game $\RR(C_4,P_{n},H)$.
\end{proof}

\section{Proof of theorem  \ref{main2}}

We prove the theorem by induction on $n$. If $8\le n\le 13$, the assertion follows from  Lemma \ref{warpocz}.

Suppose that $n\ge 14$ and assume that the theorem is true for the game $\RR(C_4,P_{n'},G)$ for every $n'< n$
and every $G$ described in the theorem. We begin by analysis of the game $\RR(C_4,P_{n})$, i.e. in case $G$ is empty.
 
\subsection{$G$ is empty}\label{gpusty}

The strategy of Builder depends on the moment the first blue edge is coloured by Painter. 
As long as Painter colours all edges red, Builder selects the edges in the following order.

\begin{itemize}
\item Stage 1.

First, the edges of a $(1,1)$-butterfly with centers $(u_0,u_1)$ are selected in order, given by edge labels:

\begin{center}
\begin{tikzpicture}
\foreach \X in {1,...,6}{\wierz{a\X}{$(\X-1,0)$};}
\wierz{b1}{$(a3)+(0,1)$};
\wierz{b2}{$(a4)+(0,1)$};
\krke{a1}{a2}{6};
\krke{a2}{a3}{2};
\krke{a3}{a4}{1};
\krke{a4}{a5}{4};
\krke{a5}{a6}{7};
\krke{a3}{b1}{3};
\krke{a4}{b2}{5};
\node [below] at (a3) {$u_0$};
\node [below] at (a4) {$u_1$};
 \end{tikzpicture}
\end{center}

\item Stage 2.

Then (as long as Painter colours all edges red) in every even round $j$ ($j\ge 8$) Builder selects an edge incident to the butterfly center $u_0$
and any free vertex, denoted by $u_j$, while in every odd round $j$ ($j\ge 9$) he selects an edge incident to another butterfly center $u_1$ and any free vertex $u_j$. 
This stage ends if either $n$ is even and we have $n-1$ coloured edges, or $n$ is odd and we have $n-2$ coloured edges. 
Thus,  for even $n$ after Stage 2, if all edges are red, they induce an $((n-6)/2,(n-6)/2)$-butterfly:

\begin{center}
 \begin{tikzpicture}
\motyl;
\node [below] at (a3) {$u_0$};
\node [below] at (a4) {$u_1$};
\node [above] at (u8) {$u_8$};
\node [above] at (u10) {$u_{10}$};
\node [above] at (upost) {$u_{n-2}$};
\node [above] at (u9) {$u_9$};
\node [above] at (u11) {$u_{11}$};
\node [above] at (uost) {$u_{n-1}$};
 \end{tikzpicture}
\end{center}

In case of odd $n$, if after Stage 2 all edges are red, they induce an $((n-7)/2,(n-7)/2)$-butterfly:

\begin{center}
 \begin{tikzpicture}
\motyl;
\node [below] at (a3) {$u_0$};
\node [below] at (a4) {$u_1$};
\node [above] at (u8) {$u_8$};
\node [above] at (u10) {$u_{10}$};
\node [above] at (upost) {$u_{n-3}$};
\node [above] at (u9) {$u_9$};
\node [above] at (u11) {$u_{11}$};
\node [above] at (uost) {$u_{n-2}$};
 \end{tikzpicture}
\end{center}

\item Stage 3.

This stage only takes place if $n$ is odd (and all edges are red). Builder selects an edge incident to $u_{n-2}$ and any 
free vertex $u_{n-1}$. Thus,  after Stage 3, if all edges are red, they induce a $((n-7)/2,(n-7)/2)$-butterfly with the additional edge $u_{n-2}u_{n-1}$:

\begin{center}
 \begin{tikzpicture}
\motyl;
\node [below] at (a3) {$u_0$};
\node [below] at (a4) {$u_1$};
\node [above] at (u8) {$u_8$};
\node [above] at (u10) {$u_{10}$};
\node [above] at (upost) {$u_{n-3}$};
\node [above] at (u9) {$u_9$};
\node [above] at (u11) {$u_{11}$};
\node [above] at (uost) {$u_{n-2}$};
\wierz{un}{$(uost)+(0,1)$};
\node [above] at (un) {$u_{n-1}$};
\kr{uost}{un};
 \end{tikzpicture}
\end{center}

\end{itemize}

We will analyse the play based on Painter choices; Builder's strategy depends on the time the first blue edge appears (if any) in Stages 1, 2, 3.

\subsubsection{No blue edges in Stages 1, 2 and 3}

If $n$ is even and no blue edges after round $n-1$,
then the $n-1$ edges induce a red $(s,s)$-butterfly on $n$ vertices and in view of Lemma \ref{motyle}\ref{bb}
Builder can force a blue path $P_n$. After $2n-2$ rounds $n-1$ edges are red and $n-1$ edges are blue.

If $n$ odd and no blue edges after round $n-1$, 
then we have a red $(s,s)$-butterfly on $n-1$ vertices, with centers $u_0$, $u_1$ and a red edge $u_{n-2}u_{n-1}$ as
described in Stage 3. Then  in view of Lemma \ref{motyle}\ref{bb}  Builder can force
a blue path $P_{n-1}$ on the vertex set of the $(s,s)$-butterfly, with ends $u_0$, $u_1$. Then he selects
the edge $u_0u_{n-1}$ and it must be coloured blue since vertices $u_0,u_{n-1}$ are ends of a red path of length 3.
This edge extends a blue path $P_{n-1}$.
Thus after $2n-2$ rounds $n-1$ edges are red and $n-1$ edges are blue and the blue edges induce a path $P_n$. 

In both cases Builder wins $\RR(C_4,P_{n})$.

\subsubsection{First blue edge in Stage 3}

Then $n\ge 14$ is odd, and after $n-1$ rounds we have the following coloured graph $H$ at the board.

\begin{center}
 \begin{tikzpicture}
\motyl;
\node [below] at (a3) {$u_0$};
\node [below] at (a4) {$u_1$};
\node [below] at (a2) {$u_2$};
\node [above] at (b1) {$u_3$};
\node [below] at (a5) {$u_4$};
\node [above] at (b2) {$u_5$};
\node [below] at (a1) {$u_6$};
\node [below] at (a6) {$u_7$};
\node [above] at (u8) {$u_8$};
\node [above] at (u10) {$u_{10}$};
\node [above] at (upost) {$u_{n-3}$};
\node [above] at (u9) {$u_9$};
\node [above] at (u11) {$u_{11}$};
\node [above] at (uost) {$u_{n-2}$};
\wierz{un}{$(uost)+(0,1)$};
\node [above] at (un) {$u_{n-1}$};
\krn{uost}{un};
 \end{tikzpicture}
\end{center}

Builder selects $u_{n-1}u_{n-3}$. 
Suppose Painter colours it red. Then the red graph induced on the vertex set $\{u_0,u_1,\ldots, u_{n-4}\}$
is an $(s,s)$-butterfly with centers $u_0$, $u_1$ and Lemma \ref{motyle}\ref{bb}  implies that
Builder can force a blue path $P_{n-3}$ with ends $u_0$, $u_1$, on the set $\{u_0,u_1,\ldots, u_{n-4}\}$.
Then he selects the edges $u_1u_{n-1}$ and  $u_{n-2}u_{n-3}$; both edges must be coloured blue 
so that a red $C_4$ is avoided. The blue path $u_1u_{n-1}u_{n-2}u_{n-3}$ extends the blue path $P_{n-3}$ 
to a blue path $P_n$.
The number of rounds made so far is $e(H)+1+e(P_{n-3})+2=2n-2$ and exactly $n-1$ edges are blue
so Builder wins $\RR(C_4,P_{n})$.

Suppose now that the edge $u_{n-1}u_{n-3}$ selected in round $n$ is blue. 
Then the red graph induced on the vertex set $\{u_0,u_1,\ldots, u_{n-2}\}$
is an $(s,s)$-butterfly with centers $u_0$, $u_1$, $s\ge 2$, and Builder 
can force a blue path on the set $V(H)$ (with ends $u_0,u_1$) in a way similar to the one in the proof of 
Lemma \ref{motyle}\ref{bb}. Builder can force a blue path from $u_0$ to $u_1$ through all vertices $\{u_2,u_3,\ldots, u_{n-2}\}$ such that $u_{n-3}$ and $u_{n-2}$ are adjacent. He selects all edges from this path except $u_{n-2}u_{n-3}$ since these two vertices have been already connected with the blue path 
$u_{n-2}u_{n-1}u_{n-3}$. Thus a blue path $P_n$ is created after  
$2n-3$ rounds of $\RR(C_4,P_{n})$.

\subsubsection{First blue edge in Stage 2}

We consider a few cases, depending on the time Painter colours an edge blue for the first time. 

\paragraph{}
First blue edge in round $n-j$, where either $j\in\{2,3\}$ or $n$ is even and $j\in\{1,4\}$.

\medskip\noindent
Then for the next $j-1\le 3$ rounds (if any) Builder selects edges as in Stage 2, starting from an edge incident to $u_1$, 
no matter how Painter responds. Thus the wings of the coloured butterfly remain almost equal. 
By case analysis we infer that after these $j-1$ rounds the obtained coloured graph $H$
is up to symmetry an $(s',s)$-butterfly on $n$ vertices and it fulfills the assumption of Lemma \ref{malo} (let us recall that $n\ge 14$). 
Further in the game $\RR(C_4,P_n)$ Builder plays according to a winning strategy of Builder in $\RR(C_4,P_{v(H)},H)$
and thereby he wins $\RR(C_4,P_n)$.

\paragraph{}
First blue edge in round $t$ for $10\le t\le n-5$ or in an odd round $t=n-4$.
%wiazki czerwone po >=2 kr.

\medskip\noindent
Let $H$ be the graph induced by $t$ coloured edges.
Suppose that $t$ is even. Then $10\le t\le n-5$, the $t-1$ red edges of $H$ induce a red $(s,s)$-butterfly $B$ with 
centers $u_0,u_1$, $s\ge 2$ and there is one blue edge, say $u_0x$,  selected in round $t$.  
Based on Lemma \ref{motyle}\ref{br}, Builder forces a blue path $P_{t-1}$ on the vertex set $V(B)\setm\{u_1\}$,
with ends $u_0$, $y$ for a vertex $y$ from the $u_1$-wing. 
The blue edge $u_0x$ extends $P_{t-1}$ to a blue path $P_{t}$. Let $A$ be the set of internal vertices of $P_{t}$, i.e.
$A=V(H)\setminus\{y,x,u_0\}$. Then $|A|=t-2$, $N_H(A)$ consists of $t-1$ blue edges of the path $P_{t}$ and 
$|E(B)\setm\{u_1y\}|=t-2$ red edges, and the ends of $P_{t}$ are not adjacent. Thus  
$|N_H(A)|+\delta_H(u_1,x)=2t-3=2|A|+1$.
The graph $H'$ obtained from $H$ by contraction  $P_{t}$ to an edge $xy$ is a $br$-path. 
Note that $n-|A|=n-t+2\ge 7$. If $n-|A|= 7$, we apply Lemma \ref{warpocz}; 
if  $n-|A|\ge 8$, we apply the inductive hypothesis; 
in both cases we infer that Builder has a winning strategy in $\RR(C_4,P_{n-|A|},H')$. Then, by Lemma \ref{sciag}, 
we conclude that  Builder has a winning strategy in $\RR(C_4,P_n,H)$.  

Suppose now that $t$ is odd. Then $11\le t\le n-4$ and the $t-1$ red edges of $H$ induce a red $(s+1,s)$-butterfly $B$ with 
centers $u_0,u_1$, $s\ge 2$, and there is one blue edge, say $u_1x$,  selected in round $t$.  Let $B'$ be an $(s,s)$-butterfly
obtained from $B$ by deleting an edge $u_0z$ of the $u_0$-wing.  
Based on Lemma \ref{motyle}\ref{br}, Builder forces a blue path $P_{t-2}$ on the vertex set $V(B')\setm\{u_0\}$, 
with ends $u_1$, $y$ for a vertex $y$ from the $u_0$-wing. 
The blue edge $u_1x$ extends $P_{t-2}$ to a blue path $P_{t-1}$. Let $A$ be the set of internal vertices of $P_{t-1}$, i.e.
$A=V(H)\setminus\{u_0,x,y,z\}$. Then $|A|=t-3$, $N_H(A)$ consists of $t-2$ blue edges of the path $P_{t-1}$ and 
$|E(B')\setm\{u_0y\}|=t-3$ red edges, and the ends of $P_{t-1}$ are not adjacent. Thus  
$|N_H(A)|+\delta_H(u_0,x)=2t-5=2|A|+1$.
The graph $H'$ obtained from $H$ by contraction  $P_{t-1}$ to an edge $xy$ is a $brr$-path $x y u_0 z$. 
Note that $n-|A|=n-t+3\ge 7$. If $n-|A|= 7$, we apply Lemma \ref{warpocz}; 
if  $n-|A|\ge 8$, we apply the inductive hypothesis; 
in both cases we infer that Builder has a winning strategy in $\RR(C_4,P_{n-|A|},H')$. 
In view of Lemma \ref{sciag}, Builder has a winning strategy in $\RR(C_4,P_n,H)$.  

\paragraph{}
First blue edge in round $t\in\{8,9\}$.

\medskip\noindent
The analysis is very similar to the argument in the previous case, but we use Lemma \ref{motyle}\ref{br}
instead of Lemma \ref{motyle}\ref{bb} and the result of contraction is a smaller path. 
For the sake of completeness, we present the details.  

Let $H$ be the graph induced by $t$ coloured edges.

Suppose that $t=8$. Then $t\le n-6$ since $n\ge 14$. The $t-1$ red edges of $H$ induce a red $(1,1)$-butterfly with 
centers $u_0,u_1$ and there is one blue edge, say $u_0x$,  selected in round $t$.  
Based on Lemma \ref{motyle}\ref{bb}, Builder forces a blue path $P_t$ with the vertex set of the $(1,1)$-butterfly, with ends $u_0$, $u_1$. 
The blue edge $u_0x$ extends $P_t$ to a blue path $P_{t+1}$. Let $A$ be the set of internal vertices of $P_{t+1}$, i.e.
$A=V(H)\setminus\{u_1,x\}$. Then $|A|=t-1$, $N_H(A)$ consists of $t$ blue edges of the path $P_{t+1}$ and $t-1$ 
red edges of $B$, and the ends of $P_{t+1}$ are not adjacent. Thus  
$|N_H(A)|+\delta_H(u_1,x)=2t-1=2|A|+1$.
The graph $H'$ obtained from $H$ by contraction  $P_{t+1}$ to an edge $u_1x$ is clearly a $b$-path. 
Note that $n-|A|=n-t+1\ge 7$. If $n-|A|= 7$, we apply Lemma \ref{warpocz}; if  $n-|A|\ge 8$, we apply the inductive hypothesis; in
both cases we infer that Builder has a winning strategy in $\RR(C_4,P_{n-|A|},H')$. Then, it follows from Lemma \ref{sciag}, 
that  Builder has a winning strategy in $\RR(C_4,P_n,H)$.  

Suppose now that $t=9$. Then $t\le n-5$. The $t-1$ red edges of $H$ induce a red $(s+1,s)$-butterfly $B$ with 
centers $(u_0,u_1)$ and there is one blue edge, say $u_1x$,  selected in round $t$.  Let $B'$ be an $(s,s)$-butterfly
obtained from $B$ by deleting an edge $u_0y$ of the $u_0$-wing.  
Based on Lemma \ref{motyle}\ref{bb}, Builder forces a blue path $P_{t-1}$ with the vertex set of the $B'$, 
with ends $u_0$, $u_1$. 
The blue edge $u_1x$ extends $P_{t-1}$ to a blue path $P_{t}$. Let $A$ be the set of internal vertices of $P_{t}$, i.e.
$A=V(H)\setminus\{u_0,x\}$. Then $|A|=t-2$, $N_H(A)$ consists of $t-1$ blue edges of the path $P_{t}$ and 
$t-2$ red edges of $B'$, and the ends of $P_{t}$ are not adjacent. Thus  
$|N_H(A)|+\delta_H(u_0,x)=2t-3=2|A|+1$.
The graph $H'$ obtained from $H$ by contraction  $P_{t}$ to an edge $u_0x$ is a $br$-path $y u_0 x$. 
We have $n-|A|=n-t+2\ge 7$. If $n-|A|= 7$, we apply Lemma \ref{warpocz}; if  $n-|A|\ge 8$, we apply the inductive hypothesis; in
both cases Builder has a winning strategy in $\RR(C_4,P_{n-|A|},H')$. Then by Lemma \ref{sciag}  
we finish the argument.

So far we have described a strategy of Builder in the game $\RR(C_4, P_n)$ provided the first blue edge appears 
in Stage 2, Stage 3, or later. In the next subsection, we analyse the game when Painter colours an edge blue sooner. 

\subsubsection{Blue edge in Stage 1}\label{stage1}

\paragraph{}\label{nieb7}
First blue edge in round 7.

\begin{center}
\begin{tikzpicture}
\jezobc{7};
 \end{tikzpicture}
\end{center}

Builder forces a blue path $u_4u_2u_5u_3u_6u_1$, which  together with the blue edge $u_7u_4$ forms a blue path
$P_7$ with ends $u_7,u_4$. Let $H$ be the graph induced by all coloured edges at that moment. 
One can verify that for $A=\{u_4,u_2,u_5,u_3,u_6\}$ we have
$|N_H(A)|+\delta_H(u_7,u_1)=11=2|A|+1$.
The graph $H'$ obtained from $H$ by contraction  $P$ to an edge $u_7u_1$ is a $br$-path $u_7u_1u_0$. 
In short, we present the argument in the following picture.

\begin{tikzpicture}
\jezobc{7};
 \end{tikzpicture}
\quad$\to$\quad
\begin{tikzpicture}
\jezobc{7};
\sciezkan{u4}{u2,u5,u3,u6,u1};
 \end{tikzpicture}
\quad$\to$\quad
\begin{tikzpicture}
\sciez{3};
\krn{a1}{a2};
\node [below] at (a1) {$u_7$};
\node [below] at (a2) {$u_1$};
\node [below] at (a3) {$u_0$};
 \end{tikzpicture}

Note that $n-|A|= n-5\ge 9$. By the inductive hypothesis we infer that Builder has a winning strategy in $\RR(C_4,P_{n-|A|},H')$. It follows from Lemma \ref{sciag} that  Builder has a winning strategy in $\RR(C_4,P_n,H)$.

\paragraph{}\label{nieb6}
First blue edge in round 6.

\medskip\noindent
The analysis is similar to these in case \ref{nieb7} so we present the pictures only and updated calculations.

\begin{tikzpicture}
\jezobc{6};
 \end{tikzpicture}
\quad$\to$\quad
\begin{tikzpicture}
\jezobc{6};
\sciezkan{u2}{u4,u3,u5};
 \end{tikzpicture}
\quad$\to$\quad
\begin{tikzpicture}
\sciez{4};
\krn{a1}{a2};
\node [below] at (a1) {$u_6$};
\node [below] at (a2) {$u_5$};
\node [below] at (a3) {$u_1$};
\node [below] at (a4) {$u_0$};
 \end{tikzpicture}

For $A=\{u_2,u_4,u_3\}$ we have
$|N_H(A)|+\delta_H(u_6,u_5)=7=2|A|+1$
and $n-|A|= n-3\ge 11$.

\paragraph{}\label{nieb5}
First blue edge in round 5.

\begin{center}
\begin{tikzpicture}
\jezobc{5};
 \end{tikzpicture}
\end{center}

Here (and in the following cases) the argument is analogous to that in the previous cases. 
However, before some contraction, we consider a few rounds more. 
Builder's move in round 6 and all possible Painter's choices are
shown in the table below.  
We assume that after Builder forces some blue edges and the obtained coloured graph  
is denoted by $H$, shown in the table. Only then we make a path contraction.  
The column last but one shows the vertices of the contracted path and the graph $H'$ obtained from $H$ by 
contraction of the path. The last column contains the number  
$$m=|N_H(A)|+\delta_H(x,y),$$ 
where $A$ is the set of internal vertices of the contracted path and $x,y$ are its ends. 
This notation is also used in the remaining cases.

\medskip
\begin{tabular}{|c|c|c|c|}
\hline
after round 6& $H$ & contracted  path and $H'$ & $m$\\
\hline
\begin{tikzpicture}
\jezobc{5};
\wierz{u6}{$(u2)-(1,0)$};
\node [below] at (u6) {$u_6$};
\kr{u2}{u6};
 \end{tikzpicture}
&
\begin{tikzpicture}
\jezobc{5};
\wierz{u6}{$(u2)-(1,0)$};
\node [below] at (u6) {$u_6$};
\kr{u2}{u6};
\sciezkan{u1}{u6,u3,u4,u2};
 \end{tikzpicture}
&
\begin{tabular}{c}
$u_5u_1u_6u_3u_4u_2$\\[10pt]
\begin{tikzpicture}
\sciez{3};
\krn{a1}{a2};
\node [below] at (a1) {$u_5$};
\node [below] at (a2) {$u_2$};
\node [below] at (a3) {$u_0$};
\end{tikzpicture}\\
\end{tabular}
&9\\
\hline
\begin{tikzpicture}
\jezobc{5};
\wierz{u6}{$(u2)-(1,0)$};
\node [below] at (u6) {$u_6$};
\krn{u2}{u6};
 \end{tikzpicture}
&
\begin{tikzpicture}
\jezobc{5};
\wierz{u6}{$(u2)-(1,0)$};
\node [below] at (u6) {$u_6$};
\krn{u2}{u6};
\sciezkan{u2}{u4,u3};
 \end{tikzpicture}
&
\begin{tabular}{c}
$u_6u_2u_4u_3$\\[10pt]
\begin{tikzpicture}
\sciez{5};
\krn{a1}{a2};
\krn{a4}{a5};
\node [below] at (a1) {$u_6$};
\node [below] at (a2) {$u_3$};
\node [below] at (a3) {$u_0$};
\node [below] at (a4) {$u_1$};
\node [below] at (a5) {$u_5$};
\end{tikzpicture}\\
\end{tabular}
&5\\
\hline
\end{tabular}

\medskip
One can easily verify that $m\le 2|A|+1$ and $n-|A|\ge 8$. Therefore by the inductive hypothesis Builder has a winning strategy in $\RR(C_4,P_{n-|A|},H')$. Then based on Lemma \ref{sciag} we conclude that  Builder has a winning strategy in $\RR(C_4,P_n,H)$. 
%The same argument applies in all remaining cases so we omit it and show the tables only.

\paragraph{}\label{nieb4}
First blue edge in round 4.

\begin{center}
\begin{tikzpicture}
\jezobc{4};
 \end{tikzpicture}
\end{center}

Builder selects an edge $u_1u_5$, where $u_5$ is any free vertex. 
If Painter colours it red then we obtain the coloured graph isomorphic to the graph after round 5, considered in case \ref{nieb5}.
Thus we can assume that Painter colours  $u_1u_5$ blue and the resulting graph is 

\begin{center}
\begin{tikzpicture}
\jezobc{4};
\wierz{u5}{$(u1)+(0,1)$};
\node [above] at (u5) {$u_5$};
\krn{u1}{u5};
 \end{tikzpicture}
\end{center}

Then Builder selects $u_3u_5$. If Painter colours it blue, we obtain the coloured graph $H$ on 6 vertices, with the blue path $u_4u_1u_5u_3$
such that $N_H(\{u_1,u_5\})+\delta_H(u_4,u_3)=4$. After the contraction of this path, we get the graph $H'$ which is a $brr$-path.

 If Painter colours $u_3u_5$ red, Builder in round 6 forces a blue edge $u_5u_2$, and the resulting coloured graph $H$ on 6 vertices
has the blue path $u_4u_1u_5u_2$
such that $N_H(\{u_1,u_5\})+\delta_H(u_4,u_2)=5$. After the contraction of this path, we get the graph $H'$ which is a $brr$-path.

In both cases we have $m\le 2|A|+1$ and $n-|A|\ge 8$ so one can argue as before that Builder wins the game.

\paragraph{}\label{nieb3}
First blue edge in round 3.

\begin{center}
\begin{tikzpicture}
\jezobc{3};
 \end{tikzpicture}
\end{center}

Builder selects two edges $u_2u_4$ and $u_2u_5$, where $u_4,u_5$ are any free vertices. Then Painter colours  
$u_2u_4$ and $u_2u_5$. If both edges are red, we receive a coloured graph isomorphic to the graph after round 5, considered in case \ref{nieb5}. Therefore further we assume that at least one edge $u_2u_4$, $u_2,u_5$  is blue. 
Consider two possibilities.

Suppose first that exactly one of the two edges is blue, say $u_2u_5$. Then Builder selects $u_2u_6$ with any free vertex $u_6$. 
The further play, split into cases depending on Painter's choice in rounds 6, 7 and 8, is presented in the table below. 
Let us recall that the graph $H$ shown in the table may contain additional blue edges, forced by Builder after 
the rounds presented step by step. The notation {\em n/a} in the table means that we do not present Builder's move 
in this round.

\medskip
\scalebox{0.74}{
 \begin{tabular}{|c|c|c|c|c|c|}
\hline
after round 6& after round 7& after round 8 & $H$ & \makecell{contracted  path\\ and $H'$} & $m$\\
\hline
\multirow{2}{*}{
\begin{tikzpicture}
\jezobc{3};
\wierz{u4}{$(u2)-(1,0)$};
\node [below] at (u4) {$u_4$};
\kr{u2}{u4};
\wierz{u5}{$(u2)+(0,1)$};
\node [above] at (u5) {$u_5$};
\krn{u2}{u5};
\wierz{u6}{$(u2)-(0,1)$};
\node [left] at (u6) {$u_6$};
\krn{u2}{u6};
 \end{tikzpicture}
}
&
\begin{tikzpicture}
\jezobc{3};
\wierz{u4}{$(u2)-(1,0)$};
\node [below] at (u4) {$u_4$};
\kr{u2}{u4};
\wierz{u5}{$(u2)+(0,1)$};
\node [above] at (u5) {$u_5$};
\krn{u2}{u5};
\wierz{u6}{$(u2)-(0,1)$};
\node [left] at (u6) {$u_6$};
\krn{u2}{u6};
\krn{u0}{u5};
 \end{tikzpicture}
&
\makecell[b]{n/a\\ \ }
&
\begin{tikzpicture}
\jezobc{3};
\wierz{u4}{$(u2)-(1,0)$};
\node [below] at (u4) {$u_4$};
\kr{u2}{u4};
\wierz{u5}{$(u2)+(0,1)$};
\node [above] at (u5) {$u_5$};
\krn{u2}{u5};
\wierz{u6}{$(u2)-(0,1)$};
\node [left] at (u6) {$u_6$};
\krn{u2}{u6};
\krn{u0}{u5};
 \end{tikzpicture}
&
\makecell[b]{
$u_3u_0u_5u_2u_6$\\
\\
\begin{tikzpicture}
\sciez{2};
\krn{a1}{a2};
\node [below] at (a1) {$u_3$};
\node [below] at (a2) {$u_6$};
 \end{tikzpicture}
}
&7\\
\cline{2-6}
  &
\begin{tikzpicture}
\jezobc{3};
\wierz{u4}{$(u2)-(1,0)$};
\node [below] at (u4) {$u_4$};
\kr{u2}{u4};
\wierz{u5}{$(u2)+(0,1)$};
\node [above] at (u5) {$u_5$};
\krn{u2}{u5};
\wierz{u6}{$(u2)-(0,1)$};
\node [left] at (u6) {$u_6$};
\krn{u2}{u6};
\kr{u0}{u5};
 \end{tikzpicture}
&
\makecell[b]{n/a\\ \ }
&
\begin{tikzpicture}
\jezobc{3};
\wierz{u4}{$(u2)-(1,0)$};
\node [below] at (u4) {$u_4$};
\kr{u2}{u4};
\wierz{u5}{$(u2)+(0,1)$};
\node [above] at (u5) {$u_5$};
\krn{u2}{u5};
\wierz{u6}{$(u2)-(0,1)$};
\node [left] at (u6) {$u_6$};
\krn{u2}{u6};
\kr{u0}{u5};
\sciezkan{u5}{u4,u1};
 \end{tikzpicture}
&
\makecell[b]{
$u_6u_2u_5u_4u_1$\\
\\
\begin{tikzpicture}
\sciez{4};
\krn{a1}{a2};
\krn{a3}{a4};
\node [below] at (a1) {$u_6$};
\node [below] at (a2) {$u_1$};
\node [below] at (a3) {$u_0$};
\node [below] at (a4) {$u_3$};
 \end{tikzpicture}
}
&7\\
\hlineB{3}
\multirow{3}{*}{
\begin{tikzpicture}
\jezobc{3};
\wierz{u4}{$(u2)-(1,0)$};
\node [below] at (u4) {$u_4$};
\kr{u2}{u4};
\wierz{u5}{$(u2)+(0,1)$};
\node [above] at (u5) {$u_5$};
\krn{u2}{u5};
\wierz{u6}{$(u2)-(0,1)$};
\node [left] at (u6) {$u_6$};
\kr{u2}{u6};
 \end{tikzpicture}
}
&
\begin{tikzpicture}
\jezobc{3};
\wierz{u4}{$(u2)-(1,0)$};
\node [below] at (u4) {$u_4$};
\kr{u2}{u4};
\wierz{u5}{$(u2)+(0,1)$};
\node [above] at (u5) {$u_5$};
\krn{u2}{u5};
\wierz{u6}{$(u2)-(0,1)$};
\node [left] at (u6) {$u_6$};
\kr{u2}{u6};
\wierz{u7}{$(u6)-(0,1)$};
\node [left] at (u7) {$u_7$};
\kr{u6}{u7};
 \end{tikzpicture}
&
\makecell[b]{n/a\\ \ }
&
\begin{tikzpicture}
\jezobc{3};
\wierz{u4}{$(u2)-(1,0)$};
\node [below] at (u4) {$u_4$};
\kr{u2}{u4};
\wierz{u5}{$(u2)+(0,1)$};
\node [above] at (u5) {$u_5$};
\krn{u2}{u5};
\wierz{u6}{$(u2)-(0,1)$};
\node [left] at (u6) {$u_6$};
\kr{u2}{u6};
\wierz{u7}{$(u6)-(0,1)$};
\node [left] at (u7) {$u_7$};
\kr{u6}{u7};
\sciezkan{u6}{u1,u4,u7,u0};
 \end{tikzpicture}
&
\makecell[b]{
$u_3u_0u_7u_4u_1u_6$\\
\\
\begin{tikzpicture}
\sciez{4};
\krn{a1}{a2};
\krn{a3}{a4};
\node [below] at (a1) {$u_3$};
\node [below] at (a2) {$u_6$};
\node [below] at (a3) {$u_2$};
\node [below] at (a4) {$u_5$};
 \end{tikzpicture}
}
&9\\
\clineB{2-6}{3}
     &
\multirow{2}{*}{
\begin{tikzpicture}
\jezobc{3};
\wierz{u4}{$(u2)-(1,0)$};
\node [below] at (u4) {$u_4$};
\kr{u2}{u4};
\wierz{u5}{$(u2)+(0,1)$};
\node [above] at (u5) {$u_5$};
\krn{u2}{u5};
\wierz{u6}{$(u2)-(0,1)$};
\node [left] at (u6) {$u_6$};
\kr{u2}{u6};
\wierz{u7}{$(u6)-(0,1)$};
\node [left] at (u7) {$u_7$};
\krn{u6}{u7};
 \end{tikzpicture}
}
&
\begin{tikzpicture}
\jezobc{3};
\wierz{u4}{$(u2)-(1,0)$};
\node [below] at (u4) {$u_4$};
\kr{u2}{u4};
\wierz{u5}{$(u2)+(0,1)$};
\node [above] at (u5) {$u_5$};
\krn{u2}{u5};
\wierz{u6}{$(u2)-(0,1)$};
\node [left] at (u6) {$u_6$};
\kr{u2}{u6};
\wierz{u7}{$(u6)-(0,1)$};
\node [left] at (u7) {$u_7$};
\krn{u6}{u7};
\kr{u5}{u0};
 \end{tikzpicture}
&
\begin{tikzpicture}
\jezobc{3};
\wierz{u4}{$(u2)-(1,0)$};
\node [below] at (u4) {$u_4$};
\kr{u2}{u4};
\wierz{u5}{$(u2)+(0,1)$};
\node [above] at (u5) {$u_5$};
\krn{u2}{u5};
\wierz{u6}{$(u2)-(0,1)$};
\node [left] at (u6) {$u_6$};
\kr{u2}{u6};
\wierz{u7}{$(u6)-(0,1)$};
\node [left] at (u7) {$u_7$};
\krn{u6}{u7};
\kr{u5}{u0};
\sciezkan{u6}{u1,u4,u5};
 \end{tikzpicture}
&
\makecell[b]{
$u_7u_6u_1u_4u_5u_2$\\
\\
\begin{tikzpicture}
\sciez{4};
\krn{a1}{a2};
\krn{a3}{a4};
\node [below] at (a1) {$u_7$};
\node [below] at (a2) {$u_2$};
\node [below] at (a3) {$u_0$};
\node [below] at (a4) {$u_3$};
 \end{tikzpicture}
}
&9\\
\cline{3-6}
  &
  &
\begin{tikzpicture}
\jezobc{3};
\wierz{u4}{$(u2)-(1,0)$};
\node [below] at (u4) {$u_4$};
\kr{u2}{u4};
\wierz{u5}{$(u2)+(0,1)$};
\node [above] at (u5) {$u_5$};
\krn{u2}{u5};
\wierz{u6}{$(u2)-(0,1)$};
\node [left] at (u6) {$u_6$};
\kr{u2}{u6};
\wierz{u7}{$(u6)-(0,1)$};
\node [left] at (u7) {$u_7$};
\krn{u6}{u7};
\krn{u5}{u0};
\end{tikzpicture}
&
\begin{tikzpicture}
\jezobc{3};
\wierz{u4}{$(u2)-(1,0)$};
\node [below] at (u4) {$u_4$};
\kr{u2}{u4};
\wierz{u5}{$(u2)+(0,1)$};
\node [above] at (u5) {$u_5$};
\krn{u2}{u5};
\wierz{u6}{$(u2)-(0,1)$};
\node [left] at (u6) {$u_6$};
\kr{u2}{u6};
\wierz{u7}{$(u6)-(0,1)$};
\node [left] at (u7) {$u_7$};
\krn{u6}{u7};
\krn{u5}{u0};
\sciezkan{u6}{u1,u4};
\end{tikzpicture}
&
\makecell[b]{
$u_7u_6u_1u_4$\\ 
then\\ 
$u_3u_0u_5u_2$\\
\\
\begin{tikzpicture}
\sciez{4};
\krn{a1}{a2};
\krn{a3}{a4};
\node [below] at (a1) {$u_7$};
\node [below] at (a2) {$u_4$};
\node [below] at (a3) {$u_2$};
\node [below] at (a4) {$u_3$};
 \end{tikzpicture}
}
&
\makecell[b]{5\\ then\\ 4}\\

\hline
\end{tabular}
} %end scalebox

\medskip
Let us comment on the last row, i.e.~when Painter colours blue edges $u_6u_7$ and $u_0u_5$ selected in rounds 7 and 8, respectively.
After round 8 Builder forces the blue path $u_6u_1u_4$. The blue path $u_7u_6u_1u_4$ has non-adjacent ends 
and 5 coloured edges incident to its internal vertices. 
So $N_H(\{u_6,u_1\})+\delta_H(u_7,u_4)=5\le 2|\{u_6,u_1\}|+1$.
We contract the path $u_7u_6u_1u_4$ and obtain a new coloured graph, let us denote it by $H''$, in which there is 
the blue path $u_3u_0u_5u_2$ with non-adjacent ends and 4 coloured edges
incident to its internal vertices. Thus $N_{H''}(\{u_0,u_5\})+\delta_H(u_3,u_2)=4\le 2|\{u_0,u_5\}|+1$.
We contract the path $u_3u_0u_5u_2$ and the graph $H'$ in the table is the result. 
By the inductive hypothesis, Builder has a winning strategy in $\RR(C_4,P_{v(H')},H')$. 
Then Lemma \ref{sciag} implies that Builder has a winning strategy in $\RR(C_4,P_{v(H'')},H'')$.
Finally, we apply Lemma \ref{sciag} again and conclude that Builder wins $\RR(C_4,P_{v(H)},H)$.
The analysis of other cases presented in the above table is simpler and we omit it.

Suppose now that both edges $u_2u_4$ and $u_2u_5$ selected by Builder in rounds 4 and 5 are coloured blue. 
The further moves of Builder and cases depending on Painter's choice in rounds 6, 7 are presented in the table below.

 \medskip
\scalebox{0.9}{
 \begin{tabular}{|c|c|c|c|c|}
\hline
after round 6& after round 7&  $H$ & \makecell{contracted  path\\ and $H'$} & $m$\\
\hline
\multirow{2}{*}{
\begin{tikzpicture}
\jezobc{3};
\wierz{u4}{$(u2)-(1,0)$};
\node [below] at (u4) {$u_4$};
\krn{u2}{u4};
\wierz{u5}{$(u2)+(0,1)$};
\node [above] at (u5) {$u_5$};
\krn{u2}{u5};
\kr{u1}{u3};
 \end{tikzpicture}
}
&
\begin{tikzpicture}
\jezobc{3};
\wierz{u4}{$(u2)-(1,0)$};
\node [below] at (u4) {$u_4$};
\krn{u2}{u4};
\wierz{u5}{$(u2)+(0,1)$};
\node [above] at (u5) {$u_5$};
\krn{u2}{u5};
\kr{u1}{u3};
\kr{u5}{u3};
 \end{tikzpicture}
&
\begin{tikzpicture}
\jezobc{3};
\wierz{u4}{$(u2)-(1,0)$};
\node [below] at (u4) {$u_4$};
\krn{u2}{u4};
\wierz{u5}{$(u2)+(0,1)$};
\node [above] at (u5) {$u_5$};
\krn{u2}{u5};
\kr{u1}{u3};
\kr{u5}{u3};
\krn{u0}{u5};
 \end{tikzpicture}
&
\makecell[b]{
$u_4u_2u_5u_0u_3$\\ 
\\
\begin{tikzpicture}
\sciez{3};
\krn{a1}{a2};
\node [below] at (a1) {$u_4$};
\node [below] at (a2) {$u_3$};
\node [below] at (a3) {$u_1$};
 \end{tikzpicture}
}
&
7\\

\cline{2-5}
  &
\begin{tikzpicture}
\jezobc{3};
\wierz{u4}{$(u2)-(1,0)$};
\node [below] at (u4) {$u_4$};
\krn{u2}{u4};
\wierz{u5}{$(u2)+(0,1)$};
\node [above] at (u5) {$u_5$};
\krn{u2}{u5};
\kr{u1}{u3};
\krn{u3}{u5};
 \end{tikzpicture}
&
\begin{tikzpicture}
\jezobc{3};
\wierz{u4}{$(u2)-(1,0)$};
\node [below] at (u4) {$u_4$};
\krn{u2}{u4};
\wierz{u5}{$(u2)+(0,1)$};
\node [above] at (u5) {$u_5$};
\krn{u2}{u5};
\kr{u1}{u3};
\krn{u3}{u5};
 \end{tikzpicture}
&
\makecell[b]{
$u_4u_2u_5u_3u_0$\\
\\
\begin{tikzpicture}
\sciez{3};
\krn{a1}{a2};
\node [below] at (a1) {$u_4$};
\node [below] at (a2) {$u_0$};
\node [below] at (a3) {$u_1$};
 \end{tikzpicture}
}
&6\\
\hlineB{3}
\multirow{2}{*}{
\begin{tikzpicture}
\jezobc{3};
\wierz{u4}{$(u2)-(1,0)$};
\node [below] at (u4) {$u_4$};
\krn{u2}{u4};
\wierz{u5}{$(u2)+(0,1)$};
\node [above] at (u5) {$u_5$};
\krn{u2}{u5};
\krn{u1}{u3};
 \end{tikzpicture}
}
&
\begin{tikzpicture}
\jezobc{3};
\wierz{u4}{$(u2)-(1,0)$};
\node [below] at (u4) {$u_4$};
\krn{u2}{u4};
\wierz{u5}{$(u2)+(0,1)$};
\node [above] at (u5) {$u_5$};
\krn{u2}{u5};
\kr{u0}{u5};
\krn{u1}{u3};
 \end{tikzpicture}
&
\begin{tikzpicture}
\jezobc{3};
\wierz{u4}{$(u2)-(1,0)$};
\node [below] at (u4) {$u_4$};
\krn{u2}{u4};
\wierz{u5}{$(u2)+(0,1)$};
\node [above] at (u5) {$u_5$};
\krn{u2}{u5};
\kr{u0}{u5};
\krn{u1}{u3};
 \end{tikzpicture}
&
\makecell[b]{
$u_4u_2u_5$\\ 
then\\ 
$u_0u_3u_1$\\
\\
\begin{tikzpicture}
\sciez{4};
\krn{a1}{a2};
\krn{a3}{a4};
\node [below] at (a1) {$u_4$};
\node [below] at (a2) {$u_5$};
\node [below] at (a3) {$u_0$};
\node [below] at (a4) {$u_1$};
 \end{tikzpicture}
}
&
\makecell[b]{3\\ then\\ 3}\\

\cline{2-5}
  &
\begin{tikzpicture}
\jezobc{3};
\wierz{u4}{$(u2)-(1,0)$};
\node [below] at (u4) {$u_4$};
\krn{u2}{u4};
\wierz{u5}{$(u2)+(0,1)$};
\node [above] at (u5) {$u_5$};
\krn{u2}{u5};
\krn{u0}{u5};
\krn{u1}{u3};
 \end{tikzpicture}
&
\begin{tikzpicture}
\jezobc{3};
\wierz{u4}{$(u2)-(1,0)$};
\node [below] at (u4) {$u_4$};
\krn{u2}{u4};
\wierz{u5}{$(u2)+(0,1)$};
\node [above] at (u5) {$u_5$};
\krn{u2}{u5};
\krn{u0}{u5};
\krn{u1}{u3};
 \end{tikzpicture}
&
\makecell[b]{
$u_4u_2u_5u_0u_3u_1$\\
\\
\begin{tikzpicture}
\sciez{2};
\krn{a1}{a2};
\node [below] at (a1) {$u_4$};
\node [below] at (a2) {$u_1$};
 \end{tikzpicture}
}
&7\\
\hline
\end{tabular}
} %end scalebox

%tu konce sciaganej sciezki przylegle - przedostatni wiersz tabelki

\medskip
Further, we argue analogously as before so we skip the details. 

\paragraph{}\label{nieb2}
First blue edge in round 2.

\begin{center}
\begin{tikzpicture}
\jezobc{2};
 \end{tikzpicture}
\end{center}

Then in round 3 Builder selects an edge incident to $u_1$ and any free vertex $u_3$. After the colour choice of Painter 
we have either a $brb$-path or a $brr$-path at the board. 

Suppose that $u_2u_0u_1u_3$ is a $brb$-path. The edges selected by Builder in rounds 4 and 5 and their possible colours 
are presented in the table below. We use notation as in previous cases and skip the argument details since they are 
analogous.

  \medskip
\scalebox{0.9}{
 \begin{tabular}{|c|c|c|c|c|}
\hline
after round 4& after round 5&  $H$ & \makecell{contracted  path\\ and $H'$} & $m$\\
\hline
\multirow{2}{*}{
\begin{tikzpicture}
\jezobc{2};
\wierz{u3}{$(u1)+(1,0)$};
\node [below] at (u3) {$u_3$};
\krn{u1}{u3};
\wierz{u4}{$(u1)+(0,1)$};
\node [above] at (u4) {$u_4$};
\kr{u1}{u4};
 \end{tikzpicture}
}
&
\begin{tikzpicture}
\jezobc{2};
\wierz{u3}{$(u1)+(1,0)$};
\node [below] at (u3) {$u_3$};
\krn{u1}{u3};
\wierz{u4}{$(u1)+(0,1)$};
\node [above] at (u4) {$u_4$};
\kr{u1}{u4};
\kr{u4}{u3};
 \end{tikzpicture}
&
\begin{tikzpicture}
\jezobc{2};
\wierz{u3}{$(u1)+(1,0)$};
\node [below] at (u3) {$u_3$};
\krn{u1}{u3};
\wierz{u4}{$(u1)+(0,1)$};
\node [above] at (u4) {$u_4$};
\kr{u1}{u4};
\kr{u4}{u3};
\sciezkan{u0}{u3};
 \end{tikzpicture}
&
\makecell[b]{
$u_2u_0u_3u_1$\\ 
\\
\begin{tikzpicture}
\sciez{3};
\krn{a1}{a2};
\node [below] at (a1) {$u_2$};
\node [below] at (a2) {$u_1$};
\node [below] at (a3) {$u_4$};
 \end{tikzpicture}
}
&
5\\

\cline{2-5}
  &
\begin{tikzpicture}
\jezobc{2};
\wierz{u3}{$(u1)+(1,0)$};
\node [below] at (u3) {$u_3$};
\krn{u1}{u3};
\wierz{u4}{$(u1)+(0,1)$};
\node [above] at (u4) {$u_4$};
\kr{u1}{u4};
\krn{u4}{u3};
 \end{tikzpicture}
&
\begin{tikzpicture}
\jezobc{2};
\wierz{u3}{$(u1)+(1,0)$};
\node [below] at (u3) {$u_3$};
\krn{u1}{u3};
\wierz{u4}{$(u1)+(0,1)$};
\node [above] at (u4) {$u_4$};
\kr{u1}{u4};
\krn{u4}{u3};
 \end{tikzpicture}
&
\makecell[b]{
$u_1u_3u_4$\\
\\
\begin{tikzpicture}
\sciez{4};
\krn{a1}{a2};
\krn{a3}{a4};
\node [below] at (a1) {$u_2$};
\node [below] at (a2) {$u_0$};
\node [below] at (a3) {$u_1$};
\node [below] at (a4) {$u_4$};
 \end{tikzpicture}
}
&3\\

\hlineB{3}

\multirow{2}{*}{
\begin{tikzpicture}
\jezobc{2};
\wierz{u3}{$(u1)+(1,0)$};
\node [below] at (u3) {$u_3$};
\krn{u1}{u3};
\wierz{u4}{$(u1)+(0,1)$};
\node [above] at (u4) {$u_4$};
\krn{u1}{u4};
 \end{tikzpicture}
}
&
\begin{tikzpicture}
\jezobc{2};
\wierz{u3}{$(u1)+(1,0)$};
\node [below] at (u3) {$u_3$};
\krn{u1}{u3};
\wierz{u4}{$(u1)+(0,1)$};
\node [above] at (u4) {$u_4$};
\krn{u1}{u4};
\kr{u0}{u4};
 \end{tikzpicture}
&
\begin{tikzpicture}
\jezobc{2};
\wierz{u3}{$(u1)+(1,0)$};
\node [below] at (u3) {$u_3$};
\krn{u1}{u3};
\wierz{u4}{$(u1)+(0,1)$};
\node [above] at (u4) {$u_4$};
\krn{u1}{u4};
\kr{u0}{u4};
 \end{tikzpicture}
&
\makecell[b]{
$u_4u_1u_3$\\ 
\\
\begin{tikzpicture}
\sciez{4};
\krn{a1}{a2};
\krn{a3}{a4};
\node [below] at (a1) {$u_2$};
\node [below] at (a2) {$u_0$};
\node [below] at (a3) {$u_4$};
\node [below] at (a4) {$u_3$};
 \end{tikzpicture}
}
&
3\\

\cline{2-5}
  &
\begin{tikzpicture}
\jezobc{2};
\wierz{u3}{$(u1)+(1,0)$};
\node [below] at (u3) {$u_3$};
\krn{u1}{u3};
\wierz{u4}{$(u1)+(0,1)$};
\node [above] at (u4) {$u_4$};
\krn{u1}{u4};
\krn{u0}{u4};
 \end{tikzpicture}
&
\begin{tikzpicture}
\jezobc{2};
\wierz{u3}{$(u1)+(1,0)$};
\node [below] at (u3) {$u_3$};
\krn{u1}{u3};
\wierz{u4}{$(u1)+(0,1)$};
\node [above] at (u4) {$u_4$};
\krn{u1}{u4};
\krn{u0}{u4};
 \end{tikzpicture}
&
\makecell[b]{
$u_2u_0u_4u_1u_3$\\
\\
\begin{tikzpicture}
\sciez{2};
\krn{a1}{a2};
\node [below] at (a1) {$u_2$};
\node [below] at (a2) {$u_3$};
 \end{tikzpicture}
}
&5\\
\hline
\end{tabular}
} %end scalebox

%tu konce sciaganej sciezki przylegle - drugi wiersz tabelki

\medskip
Assume now that $u_2u_0u_1u_3$ is a $brr$-path. Further play is presented below, cases depend on
Painter's choice in rounds 4, 5, 6.

 \medskip
\hskip-25pt
\scalebox{0.7}{
 \begin{tabular}{|c|c|c|c|c|c|}
\hline
after round 4& after round 5& after round 6 & $H$ & \makecell{contracted  path\\ and $H'$} & $m$\\
\hline

\multirow{3}{*}{
\begin{tikzpicture}
\jezobc{2};
\wierz{u3}{$(u1)+(1,0)$};
\node [below] at (u3) {$u_3$};
\kr{u1}{u3};
\wierz{u4}{$(u3)+(1,0)$};
\node [below] at (u4) {$u_4$};
\krn{u3}{u4};
 \end{tikzpicture}
}
&
\begin{tikzpicture}
\jezobc{2};
\wierz{u3}{$(u1)+(1,0)$};
\node [below] at (u3) {$u_3$};
\kr{u1}{u3};
\wierz{u4}{$(u3)+(1,0)$};
\node [below] at (u4) {$u_4$};
\krn{u3}{u4};
\krnwyg{u4}{u1};
 \end{tikzpicture}
&
\makecell[b]{n/a\\ \ }
&
\begin{tikzpicture}
\jezobc{2};
\wierz{u3}{$(u1)+(1,0)$};
\node [below] at (u3) {$u_3$};
\kr{u1}{u3};
\wierz{u4}{$(u3)+(1,0)$};
\node [below] at (u4) {$u_4$};
\krn{u3}{u4};
\krnwyg{u4}{u1};
 \end{tikzpicture}
&
\makecell[b]{
$u_1u_4u_3$\\
\\
\begin{tikzpicture}
\sciez{4};
\krn{a1}{a2};
\krn{a3}{a4};
\node [below] at (a1) {$u_2$};
\node [below] at (a2) {$u_0$};
\node [below] at (a3) {$u_1$};
\node [below] at (a4) {$u_3$};
 \end{tikzpicture}
}
&3\\
\clineB{2-6}{3}
     &
\multirow{2}{*}{
\begin{tikzpicture}
\jezobc{2};
\wierz{u3}{$(u1)+(1,0)$};
\node [below] at (u3) {$u_3$};
\kr{u1}{u3};
\wierz{u4}{$(u3)+(1,0)$};
\node [below] at (u4) {$u_4$};
\krn{u3}{u4};
\krwyg{u4}{u1};
 \end{tikzpicture}
}
&
\begin{tikzpicture}
\jezobc{2};
\wierz{u3}{$(u1)+(1,0)$};
\node [below] at (u3) {$u_3$};
\kr{u1}{u3};
\wierz{u4}{$(u3)+(1,0)$};
\node [below] at (u4) {$u_4$};
\krn{u3}{u4};
\krwyg{u4}{u1};
\wierz{u5}{$(u3)+(0,1)$};
\node [above] at (u5) {$u_5$};
\kr{u3}{u5};
 \end{tikzpicture}
&
\begin{tikzpicture}
\jezobc{2};
\wierz{u3}{$(u1)+(1,0)$};
\node [below] at (u3) {$u_3$};
\kr{u1}{u3};
\wierz{u4}{$(u3)+(1,0)$};
\node [below] at (u4) {$u_4$};
\krn{u3}{u4};
\krwyg{u4}{u1};
\wierz{u5}{$(u3)+(0,1)$};
\node [above] at (u5) {$u_5$};
\kr{u3}{u5};
\sciezkan{u4}{u5,u0};
 \end{tikzpicture}
&
\makecell[b]{
$u_2u_0u_5u_4u_3$\\
\\
\begin{tikzpicture}
\sciez{3};
\krn{a1}{a2};
\node [below] at (a1) {$u_2$};
\node [below] at (a2) {$u_3$};
\node [below] at (a3) {$u_1$};
 \end{tikzpicture}
}
&7\\
\cline{3-6}
  &
  &
\begin{tikzpicture}
\jezobc{2};
\wierz{u3}{$(u1)+(1,0)$};
\node [below] at (u3) {$u_3$};
\kr{u1}{u3};
\wierz{u4}{$(u3)+(1,0)$};
\node [below] at (u4) {$u_4$};
\krn{u3}{u4};
\krwyg{u4}{u1};
\wierz{u5}{$(u3)+(0,1)$};
\node [above] at (u5) {$u_5$};
\krn{u3}{u5};
 \end{tikzpicture}
&
\begin{tikzpicture}
\jezobc{2};
\wierz{u3}{$(u1)+(1,0)$};
\node [below] at (u3) {$u_3$};
\kr{u1}{u3};
\wierz{u4}{$(u3)+(1,0)$};
\node [below] at (u4) {$u_4$};
\krn{u3}{u4};
\krwyg{u4}{u1};
\wierz{u5}{$(u3)+(0,1)$};
\node [above] at (u5) {$u_5$};
\krn{u3}{u5};
 \end{tikzpicture}
&
\makecell[b]{
$u_4u_3u_5$\\ 
\\
\begin{tikzpicture}
\sciez{5};
\krn{a1}{a2};
\krn{a4}{a5};
\node [below] at (a1) {$u_2$};
\node [below] at (a2) {$u_0$};
\node [below] at (a3) {$u_1$};
\node [below] at (a4) {$u_4$};
\node [below] at (a5) {$u_5$};
 \end{tikzpicture}
}
&
3\\

\hlineB{3}

\begin{tikzpicture}
\jezobc{2};
\wierz{u3}{$(u1)+(1,0)$};
\node [below] at (u3) {$u_3$};
\kr{u1}{u3};
\wierz{u4}{$(u3)+(1,0)$};
\node [below] at (u4) {$u_4$};
\kr{u3}{u4};
 \end{tikzpicture}
&
\begin{tikzpicture}
\jezobc{2};
\wierz{u3}{$(u1)+(1,0)$};
\node [below] at (u3) {$u_3$};
\kr{u1}{u3};
\wierz{u4}{$(u3)+(1,0)$};
\node [below] at (u4) {$u_4$};
\kr{u3}{u4};
\krnwyg{u4}{u0};
 \end{tikzpicture}
&
\makecell[b]{n/a\\ \ }
&
\begin{tikzpicture}
\jezobc{2};
\wierz{u3}{$(u1)+(1,0)$};
\node [below] at (u3) {$u_3$};
\kr{u1}{u3};
\wierz{u4}{$(u3)+(1,0)$};
\node [below] at (u4) {$u_4$};
\kr{u3}{u4};
\krnwyg{u4}{u0};
 \end{tikzpicture}
&
\makecell[b]{
$u_2u_0u_4$\\
\\
\begin{tikzpicture}
\sciez{4};
\krn{a1}{a2};
\node [below] at (a1) {$u_2$};
\node [below] at (a2) {$u_4$};
\node [below] at (a3) {$u_3$};
\node [below] at (a4) {$u_1$};
 \end{tikzpicture}
}
&3\\
\hline
\end{tabular}
} %end scalebox

%tu konce sciaganej sciezki przylegle - pierwszy wiersz tabelki

\medskip

\paragraph{}\label{nieb1}
First blue edge in round 1.

\medskip\noindent
In this case we have a blue edge $u_0u_1$. In the second round, Builder selects an edge incident to $u_1$ and some 
free vertex $u_2$. If Painter colours  $u_1u_2$ red, we obtain a coloured graph considered in the previous case \ref{nieb2}.
If Painter colours  $u_1u_2$ blue, we have a blue path $u_0,u_1,u_2$, which can be contracted to a blue edge and based
on Lemma \ref{sciag} and the inductive hypothesis, we conclude that Builder wins the game.

\subsection{$G$ is not empty}

It remains to prove the statement if $G$ is a $b$-, $br$-, $brb$-, $brr$-, or $brrb$-path. 
For a $b$-path, the game was analysed in Section \ref{stage1}, case \ref{nieb1}. 
The cases of $br$-, $brb$- and $brrb$-paths were analysed 
in Section \ref{stage1}, case \ref{nieb2}. In the same place we also checked that
Builder wins starting from a $brrr$- or $brrb$-path and hence he wins starting from a $brr$-path as well. 

\section{Algorithm for small instances of $\RR(C_4,P_n,H)$}
\label{algorithm}

We consider a slightly modified version of the game $\RR(C_4,P_n,H)$ introduced in Section \ref{prelim}, which will be denoted by $\RRC(C_4,P_n,H,v,e)$, where $v,e\in\N$.
%Here is the list of differences.
%\begin{enumerate}
%\item Builder wins, if he can force red $C_4$ or blue $P_n$ in at most $e-|E(H)|$ moves and loses otherwise.
%\item Builder can use at most $v$ vertices (including vertices in a graph H).
%\item  At any moment all edges must form a connected graph (this is a Connector-Painter game).
%\end{enumerate}
The board of the game is 
$K_\N$, with exactly $e(H)$ edges coloured, inducing a copy of $H$ in $K_\N$. The rules of selecting and colouring edges
by Builder and Painter are the same as in the standard game $\R(C_4,P_n)$. The additional rules are:
\begin{enumerate}
\item[(1)] 
Builder wins $\RRC(C_4,P_n,H,v,e)$ if after at most $e-|E(H)|$ rounds there is a red $C_4$ or a blue $P_n$ at the board and he loses otherwise.
\item[(2)]
At any moment of the game, the graph induced by coloured edges (including the edges of $H$) has at most $v$ vertices.
\item[(3)] 
At any moment of the game, the graph induced by coloured edges is connected. 
\item[(4)] 
In every round, Builder selects an edge $x$ such that the graph induced by all blue edges and $x$ is contained in a path on $n$ vertices. 
\end{enumerate}

We will also write $rc(C_4,P_n,H,v,e)=1$ if Builder has a winning strategy in $\RRC(C_4,P_n,H,v,e)$ and $rc(C_4,P_n,H,v,e)=0$ otherwise. %Moreover, in this section $\RRC(C_4,P_n,H,v,e)$ and $RRC(C_4,P_n,H',v,e)$ will be called two positions in one game and not two different games.
Note that the essential difference between $\RRC(C_4,P_n,H,\infty,2n-2)$ and $\RR(C_4,P_n,H)$ is the connectivity condition. This condition makes the game potentially harder for Builder. Moreover, if $v<v'$, then $rc(G_1,G_2,H,v,e)\le rc(G_1,G_2,H,v',e)$ (extra vertices cannot harm Builder). Our program, described below, computes the values $rc(C_4,P_n,H,n+1,2n-2)$ for $n\in\{7,8,...13\}$ and every $H$ which is one of the coloured graphs: empty graph, $b$-path, $br$-path, $brb$-path, $brr$-path, $brrb$-path.
As the result of the computation, we obtained 
$rc(C_4,P_7,\emptyset,8,12)=0$ and in all other cases $rc(C_4,P_n,H,n+1,2n-2)=1$. It proves Lemma \ref{warpocz}, since the condition $rc(C_4,P_n,H,n+1,2n-2)=1$ implies also a winning strategy of Builder in $\RR(C_4,P_n,H)$.

Using the program, we computed also that $rc(C_4,P_7,\emptyset,8,13)=1$ and it proves the second part of Theorem \ref{main}.

\subsection{Algorithm outline}

The main goal of the algorithm is to find a sufficient Builder's move in every position arising from every possible Painter's strategy. By a position we mean a coloured graph induced by all edges coloured in the game so far. 
%Positions are analysed using a recursive method. 
%The main optimization is to try to use moves from the previous (smaller) game.
We analyse the game tree of $\RRC(C_4,P_n,H,v,e)$ using a recursive method and a simple version of a standard alpha-beta pruning. Here is the outline.

Consider a position $H'$. Create the list of all  Builder choices of his next move, not violating the rules (2), (3) and (4) of the game. Consider the first move, say edge $x$, on the list. Consider every possible colouring of $x$.
%; let us recall that the red colouring of $x$ is eliminated if it creates a red $C_4$. 
For the obtained coloured graph $H'+x$ check if it is isomorphic to a position already considered before.\footnote{In fact, there is no known polynomial algorithm for graph isomorphism problem, so the program looks for isomorphic graphs but doesn't try too hard. The idea is to sort all vertices by blue and red degrees and to look for identical graphs. The visited graphs are stored as a combination of adjacency matrices: the lower-diagonal one represents blue edges and the upper-diagonal one represents red ones. This allows to store a coloured graph in $|V(G)|^2$ bits.}  If so, use the known value $rc(C_4,P_n,H'+x,v,e)$ and prune the further subtree. Otherwise, call the recursive procedure in order to find $rc(C_4,P_n,H'+x,v,e)$. If $rc(C_4,P_n,H'+x,v,e)=1$, prune all other branches of the game tree, starting from the position $H'$; otherwise consider the next edge on the Builder move list in position $H'$ and continue the analysis. 

The algorithm stops if there is a red $C_4$ or a blue $P_n$ at the board, or there are $e-e(H)$ coloured edges. Evaluating a final position $H'$  
is obvious: we put $rc(C_4,P_n,H',v,e)=1$ if $H'$ contains a red $C_4$ or a blue $P_n$; otherwise we put $rc(C_4,P_n,H',v,e)=0$.

\subsection{Implementation}

We implemented the above algorithm in C++ and present the code in Appendix \ref{app:program}.

Let us add that in order to shorten the running time, we create the list of all possible Builder's moves in a position $H$
of the game
$\RRC(C_4,P_{n[t]},H,v[t],e[t])$ in such a way, that if $x$ is a winning move in position $H'$ in a game $\RRC(C_4,P_{n[t-1]},H,v[t-1],e[t-1])$, then $x$ is first at the list while analysing position $H'$ of $\RRC(C_4,P_{n[t]},H,v[t],e[t])$ (functions $n[t],v[t],e[t]$ denote here parameters of $t$-th analysed game in the program). At our standard PC, the program evaluated all games listed in Lemma \ref{warpocz} within about 7 minutes.

\section{Proof of Theorem \ref{main}}

The first part of the main theorem is the consequence of the fact that Builder has a winning strategy in the game $\RR(C_4,P_n)$, as stated in Theorem \ref{main2}. The second part of Theorem \ref{main} follows from the computation $rc(C_4,P_7,\emptyset,8,13)=1$ mentioned in Section \ref{algorithm}, which implies that Builder in $\R(C_4,P_7)$ can force Painter to create a red $C_4$ or a blue $P_7$ within at most 13 rounds.

\bibliographystyle{amsplain}

\appendix
\section{Algorithm implementation}
\label{app:program}

Below we include the C++ implementation of the algorithm presented in section \ref{algorithm}. The program also generates files with winning strategies for Builder in each analysed game (obviously only for games where he has a winning strategy). Each file contains  
%decision trees for all given positions in which Builder can win. 
some decision trees. Every tree has game parameters as the root  and shows a complete winning strategy for Builder in the root position (of course if he can win).
Each tree is traversed pre-order, but if the program detects that two or more nodes represent isomorphic coloured graphs, then it 
explores the subtree for the first node only.
%goes only into the first subtree. 
Though the files with winning Builder strategies are not necessary for proving lemma \ref{warpocz}, they may be useful for creating a perfect AI Builder player. Additionally, these files can be treated as another proof of lemma \ref{warpocz}. However, the size of the files grows exponentially with the number of moves, so it is almost impossible to verify them by a human. That is also the reason why we do not include all the files here. One can find them in a git repository \url{https://github.com/urojony/c4pn}.

Here is an example of such a file. The vertices are labelled as hex numbers $0,1,2,\ldots,C,D$. Every line starting with ``rc'' describes a game and a starting position (a coloured graph $H$). Each line that starts from ``r:'' represents a position in a game that is reachable using the Builder's strategy against any Painter's one. For example, line 5 says that if there is a graph with red edges 0-2, 1-2, and a blue edge 0-1, Builder can win by selecting the edge 0-3. Line 10 says that if there is a graph with red edges 0-1, 0-2, and a blue edge 1-2, Builder can win by choosing 1-3. The last part ``l: 5'' informs that the isomorphic graph was analysed in line 5, so this graph will not be analysed further. 

\lstset{numbers=left,basicstyle=\ttfamily}
\begin{lstlisting}
rc(C4,P3,empty,4,6)=1
r: b: m: 01
 r: b: 01 m: 02
  r: 02 b: 01 m: 21
   r: 02 12 b: 01 m: 03
    r: 02 03 12 b: 01 m: 13
 r: 01 b: m: 02
  r: 01 b: 02 m: 12 l: 4
  r: 01 02 b: m: 12
   r: 01 02 b: 12 m: 13 l: 5
   r: 01 02 12 b: m: 03
    r: 01 02 12 b: 03 m: 23
     r: 01 02 12 23 b: 03 m: 13
    r: 01 02 03 12 b: m: 23
     r: 01 02 03 12 b: 23 m: 13 l: 13
rc(C4,P3,b-path,4,6)=1
 r: b: 01 m: 02 l: 3
rc(C4,P3,br-path,4,6)=1
  r: 12 b: 01 m: 20 l: 4
rc(C4,P3,brr-path,4,6)=1
   r: 12 23 b: 01 m: 13
    r: 12 13 23 b: 01 m: 30 l: 12
rc(C4,P3,brb-path,4,6)=0
\end{lstlisting}

\subsection{Main functions}

The main functions used in our C++ program are: construct, colour.
These are two recurrent functions which simulate moves of Builder and Painter, respectively; they return a true value if the analysed position is winning for Builder and a false value otherwise.

We use also other functions: sortAndMerge, hasOnlyPaths, hasC4, and an object ANAL\_POS which is a vector of maps. Here are their descriptions:

\begin{itemize}
    \item 
    sortAndMerge(blueG,redG) sorts the vertices in the graph by blue degrees and then by red degrees, then compresses two adjacency matrices into one matrix;
    \item
     hasOnlyPaths(blueG,edge) that checks if that graph blueG+edge is a sum of vertex-disjoint paths provided blueG is such a graph;
     \item
     hasC4(redG,edge) that checks if the graph redG+edge contains $C_4$ provided redG does not;
     \item
     ANAL\_POS[$T$] is a map of analysed positions and winning moves for Builder in the game number $T$; ANAL\_POS[T][position] is 0, if in the game number $T$ in the given position there is no winning  move for Builder. 
\end{itemize}

%Here is the list of functions and objects used in the program with descriptions.

%\begin{tabularx}{\textwidth}{|l|X|}
%\hline
%construct, colour & two recurrent functions which simulate moves of Builder and Painter, respectively; they return 1 if Builder has a winning position and 0 otherwise\\ 
%\hline
%analysed & vector of maps, analysed[T] is a map of analysed positions and good (i.e. winning) moves for Builder in game no. T; analysed[T][position]==0, if in game no. T in the given position there is no good move for Builder, i.e. Painter wins \\
%\hline
%sorted(blueG,redG) & sorts the vertices in the graph by blue degree and then by red degree\\
%\hline
%\end{tabularx}

%\begin{tabularx}{\textwidth}{|l|X|}
%\hline
%onlypaths(blueG,edge) & checks if blueG+edge is a sum of vertex-disjoint paths assuming blueG is such a graph \\
%\hline
%c4(redG,edge) & checks if redG+edge has a C4 cycle assuming redG has not \\
%\hline
%\end{tabularx}

\subsection{C++ program}

%The following implementation is written in C++.

{\small
\lstset{language=C++,
                basicstyle=\ttfamily,
                keywordstyle=\color{blue}\ttfamily,
                stringstyle=\color{red}\ttfamily,
                commentstyle=\color{green}\ttfamily,
                morecomment=[l][\color{magenta}]{\#}
}
\begin{lstlisting}
#include<vector>
#include<array>
#include<bitset>
#include<set>
#include<unordered_set>
#include<unordered_map>
#include<algorithm>
#include<cstdio>
#include<string>
#include<map>
using namespace std;
///Tmax - number of considered games
///T2max - number of considered starting positions in each game
constexpr int T_MAX=12,T2_MAX=6;
///Parameters of considered games - the T-th game
///is played on V[T] vertices, the goal for the builder is to build
///a red C4 or a blue path of length N[T] in E[T] moves.
///N should be sorted in nondescending order for faster performance
///For any T, V[T]>N[T] (otherwise Painter wins trivially or easy)
constexpr int V[]={4,5,6,7,8,8,9,10,11,12,13,14},
E[]={6,8,9,11,13,12,14,16,18,20,22,24},N[]={3,4,5,6,7,7,8,9,10,11,12,13};
///Vmax has to be the biggest number in V[0],V[1],...,V[Vmax-1]
///If it's not V[Tmax-1], change manually
constexpr int V_MAX=V[T_MAX-1],V_MAX_SQ=V_MAX*V_MAX;
int T=0;
///will be treated as Vmax*Vmax matrix (usually representing a graph)
typedef bitset<V_MAX_SQ> matrix;
///list of starting positions: j-th position is stored in RED_EDGES[j]
///and BLUE_EDGES[j], each edge is stored as two consecutive numbers
///so e.g. {0,1,2,3} means two edges: 0-1 and 2-3;
///if a position is not possible in a given game, it will be ignored
///for example if V[0]=4 (so the set of vertices={0,1,2,3}) then all
///positions in the 0-th game which contains a vertex 4 will be ignored
const vector<int> RED_EDGES[] {{},{},{1,2},{1,2,2,3},{1,2},{1,2,2,3}},
BLUE_EDGES[] {{},{0,1},{0,1},{0,1},{0,1,2,3},{0,1,3,4}};
#define ST first
#define ND second

///map of all analysed positions with builder to move
///key=position
///value=0 if builder has no winning move or (v1<<8)+v2 if (v1,v2) is winning
unordered_map<matrix,int> ANAL_POS[T_MAX];
matrix strictlyLowerTriangularMask() {
   matrix low;
   for(int i=1;i<V_MAX;++i)
      for(int j=0;j<i;++j)
         low[i*V_MAX+j]=1;
   return low;
}
const matrix LOWER_MASK=strictlyLowerTriangularMask();
matrix strictlyUpperTriangularMask(){
   matrix upp;
   for(int i=0;i<V_MAX;++i)
      for(int j=i+1;j<V_MAX;++j)
         upp[i*V_MAX+j]=1;
   return upp;
}
const matrix UPPER_MASK=strictlyUpperTriangularMask();
const array<matrix,V_MAX> verticesMasks(){
///masks for each row of a matrix
   array<matrix,V_MAX> ver;
   int i=0;
   for(i=0;i<V_MAX;++i)
      ver[0][i]=1;
   for(i=1;i<V_MAX;++i)
      ver[i]=ver[i-1]<<V_MAX;
   return ver;
}
const array<matrix,V_MAX> ROW_MASKS=verticesMasks();
int degree(matrix& graph,int v){
   return (graph&ROW_MASKS[v]).count();
}
struct Graph{
   matrix m; ///adjacency matrix
   int e=0; ///number of edges
   void addEdge(int v1,int v2){
      m[v1*V_MAX+v2]=m[v2*V_MAX+v1]=1;
      ++e;
   }
   void removeEdge(int v1,int v2){
      m[v1*V_MAX+v2]=m[v2*V_MAX+v1]=0;
      --e;
   }
   void clear(){
      m.reset();
      e=0;
   }
   int degree(int v){
      return ::degree(m,v);
   }
};

void reorder(matrix &graph,int invOrder[]){
///reorders vertices in a graph - useful for decreasing
///the number of unique positions in the game
   matrix res;
   for(int i=graph._Find_first();i<V_MAX_SQ;i=graph._Find_next(i))
      res[invOrder[i/V_MAX]*V_MAX+invOrder[i%V_MAX]]=1;
   graph=res;
}
matrix sortAndMerge
(matrix blueG,matrix redG,int v,int invOrder[],pair<int,int> order[]){
///sorts vertices of matrices by blue degree, then by red degree
///after that it merges two matrices into one
///it sends back the merged matrix and the order of sorting
   int i;
   for(i=0;i<v;++i)
      order[i]={-degree(blueG,i)*4194304-degree(redG,i)*65536,i};
   sort(order,order+v);
   for(i=0;i<v;++i)
      invOrder[order[i].ND]=i;
   reorder(blueG,invOrder);
   reorder(redG,invOrder);
   return (blueG&LOWER_MASK) | (redG&UPPER_MASK);
}
bool isPnPath(matrix graph){
///if there is an exactly one path of length Nt[T] and
///no other edges, returns 1,
///if there is no path of length Nt[T], returns 0,
///otherwise it can return any value
   //return 1; ///if Vt[T]==Nt[T]+1, it should always return 1 when invoked
   int i,j,k,len=0;
   for(i=0;i<V[T];++i)
      if(degree(graph,i)==1)
         break;
   if(i==V[T])
      return 0;
   for(j=i;;){
      k=graph._Find_next(j*V_MAX-1)-j*V_MAX;
      if(k>=V_MAX)
         break;
      graph[j*V_MAX+k]=graph[k*V_MAX+j]=0;
      ++len;
      j=k;
   }
   return len+1==N[T];
}
bool hasC4(Graph& graph,int i,int j){
///checks if graph has a C4 cycle which contains i-j edge
   if(graph.e<3)
      return 0;
   auto graph2=graph.m&ROW_MASKS[i];
   ///macro for iterating a bitset over interval [beg,end)
   #define FOR_BS(i,bset,beg,end)\
   for(int i=(bset)._Find_next((beg)-1);i<end;i=(bset)._Find_next(i))
   FOR_BS(k,graph.m,j*V_MAX,j*V_MAX+i){
      if(((graph2>>((i-k+j*V_MAX)*V_MAX))&graph.m).count()>1)
         return 1;
   }
   FOR_BS(k,graph.m,j*V_MAX+i+1,j*V_MAX+V_MAX){
      if(((graph2<<((k-j*V_MAX-i)*V_MAX))&graph.m).count()>1)
         return 1;
   }
   return 0;
}
bool hasOnlyPaths(matrix graph,int v1,int v2){
///assuming graph contains only disjoint paths, check if
///the edge v1-v2 doesn't destroy this property
   int j,k;
   const int d1=degree(graph,v1),d2=degree(graph,v2);
   if(d1>1 || d2>1)
      return 0;
   if(d1==0 || d2==0){
      return 1;
   }
   for(j=v1;;){
      k=graph._Find_next(j*V_MAX-1)-j*V_MAX;
      if(k>=V_MAX)
         break;
      graph[j*V_MAX+k]=graph[k*V_MAX+j]=0;
      j=k;
   }
   return j!=v2;
}
int NUM_OF_UNIQ_POS=0,NUM_OF_ALL_POS=0;
bool colour(Graph blueG,Graph redG,int v,int i,int j);
bool construct(Graph blueG,Graph redG,int v){
///returns 1 iff builder has a winning move
   if(blueG.e>=N[T] || redG.e>E[T]-N[T]+1)
      return 0;
   int invOrder[v+3];
   pair<int,int> order[v+3];
   auto sPos=sortAndMerge(blueG.m,redG.m,v,invOrder,order);
   ++NUM_OF_ALL_POS;
   if(ANAL_POS[T].count(sPos))
      return ANAL_POS[T][sPos];
   ++NUM_OF_UNIQ_POS;
   if((NUM_OF_UNIQ_POS&((1<<23)-1))==0)
      printf("Unique positions analysed so far: %d\n",NUM_OF_UNIQ_POS);
   if(blueG.e+redG.e==E[T])
      return 0;
   order[v].ND=v;
   order[v+1].ND=v+1;
   matrix checked=blueG.m|redG.m;
   if(T>0 && ANAL_POS[T-1].count(sPos) && ANAL_POS[T-1][sPos]){
   ///checks if in the previous game this position was reached
      int moove=ANAL_POS[T-1][sPos];
      int v1=order[moove>>8].ND,v2=order[moove&255].ND;
      if(colour(blueG,redG,max({v,v1+1,v2+1}),v1,v2)){
      ///try to play the same move
         ANAL_POS[T][sPos]=moove;
         return 1;
      }
      else
         checked[v1*V_MAX+v2]=checked[v2*V_MAX+v1]=1;
   }
   ///try moves with vertices that already have edges
   for(int i=v-1;i>=0;--i)
      if(blueG.degree(i)<=1)
         for(int j=v-1;j>i;--j)
            if(!checked[i*V_MAX+j] && blueG.degree(j)<=1){
               if(colour(blueG,redG,v,i,j)){
                  ANAL_POS[T][sPos]=(invOrder[i]<<8)+invOrder[j];
                  return 1;
               }
               else
                  checked[i*V_MAX+j]=checked[j*V_MAX+i]=1;
            }
   ///try moves with an unused vertice
   if(v<V[T])
      for(int i=0;i<v;++i){
         if(blueG.degree(i)<=1&& !checked[i*V_MAX+v]
         && colour(blueG,redG,v+1,i,v)){
            ANAL_POS[T][sPos]=(invOrder[i]<<8)+v;
            return 1;
         }
      }
   ///the first move has two unused vertices
   if(v==0 && colour(blueG,redG,v+2,v,v+1)){
      ANAL_POS[T][sPos]=(v<<8)+v+1;
      return 1;
   }
   ANAL_POS[T][sPos]=0;
   return 0;
}
void print(const matrix& mergedG,int moove){
   printf("r: ");
   int i,j;
   for(i=0;i<V_MAX;++i)
      for(j=i+1;j<V_MAX;++j)
         if(mergedG[i*V_MAX+j])
            printf("%X%X ",i,j);
   printf("b: ");
   for(i=1;i<V_MAX;++i)
      for(j=0;j<i;++j)
         if(mergedG[i*V_MAX+j])
            printf("%X%X ",i,j);
   printf("m: %X%X\n",moove>>8,moove&255);
}
void print(const matrix& blueG,const matrix& redG,
            int moove,FILE *fp,bool newline=1){
   fprintf(fp,"r: ");
   int i,j;
   for(i=0;i<V_MAX;++i)
      for(j=i+1;j<V_MAX;++j)
         if(redG[i*V_MAX+j])
            fprintf(fp,"%X%X ",i,j);
   fprintf(fp,"b: ");
   for(i=0;i<V_MAX;++i)
      for(j=i+1;j<V_MAX;++j)
         if(blueG[i*V_MAX+j])
            fprintf(fp,"%X%X ",i,j);
   fprintf(fp,"m: %X%X",moove>>8,moove&255);
   if(newline)
      fprintf(fp,"\n");
}
bool colour(Graph blueG,Graph redG,int v,int i,int j){
///returns 0 iff painter has a winning move
   if(!hasOnlyPaths(blueG.m,i,j))
      return 0;
   blueG.addEdge(i,j);
   bool isVertexWithoutBlueEdges=0;
   for(int i=V[T]-1;i>=0;--i)
      if(blueG.degree(i)==0){
         isVertexWithoutBlueEdges=1;
         break;
      }
   if(!isVertexWithoutBlueEdges) ///if all vertices have blue edges,
      return 0; ///then at least one edge will be wasted
   ///paint i-j red and check who's winning
   blueG.removeEdge(i,j);
   redG.addEdge(i,j);
   if(!hasC4(redG,i,j) && !construct(blueG,redG,v)){
      return 0;
   }
   ///paint i-j blue and check who's winning
   redG.removeEdge(i,j);
   blueG.addEdge(i,j);
   if((blueG.e+2<N[T] || !isPnPath(blueG.m))&& !construct(blueG,redG,v)){
      return 0;
   }
   return 1;
}
unordered_map<matrix,int> BOOK_POS;
int LINE_NUMBER;
void printBook(Graph blueG,Graph redG,int v,int e,FILE *fp){
   int invOrder[v+3];
   pair<int,int> order[v+3];
   auto sPos=sortAndMerge(blueG.m,redG.m,v,invOrder,order);
   order[v].ND=v;
   order[v+1].ND=v+1;
   if(!ANAL_POS[T].count(sPos)){
      construct(blueG,redG,v);
   }
   int i,j,moove;
   moove=ANAL_POS[T][sPos];
   if(moove==0){
      printf("Can't find winning strategy for Builder\n");
      exit(2137); ///this shouldn't happen
   }
   i=moove/256;
   j=moove%256;
   i=order[i].ND;
   j=order[j].ND;
   for(int h=0;h<e;++h)
      fprintf(fp," ");
   print(blueG.m,redG.m,i*256+j,fp,0);
   if(BOOK_POS.count(sPos)){
      fprintf(fp," l: %d\n",BOOK_POS[sPos]);
      ++LINE_NUMBER;
      return;
   }
   else{
      fprintf(fp,"\n");
      BOOK_POS[sPos]=++LINE_NUMBER;
   }
   blueG.addEdge(i,j);
   if(!isPnPath(blueG.m))
      printBook(blueG,redG,max({v,i+1,j+1}),e+1,fp);
   blueG.removeEdge(i,j);
   redG.addEdge(i,j);
   if(!hasC4(redG,i,j))
      printBook(blueG,redG,max({v,i+1,j+1}),e+1,fp);
}
int fillGraph(Graph& blueG,Graph& redG,const int t2){
   blueG.clear();
   redG.clear();
   for(int i=1;i<(int)BLUE_EDGES[t2].size();i+=2)
      blueG.addEdge(BLUE_EDGES[t2][i],BLUE_EDGES[t2][i-1]);
   for(int i=1;i<(int)RED_EDGES[t2].size();i+=2)
      redG.addEdge(RED_EDGES[t2][i],RED_EDGES[t2][i-1]);
   int mxVertexIndex=-1;
   for(int i:BLUE_EDGES[t2])
      mxVertexIndex=max(i,mxVertexIndex);
   for(int i:RED_EDGES[t2])
      mxVertexIndex=max(i,mxVertexIndex);
   return mxVertexIndex;
}
int main(){
   Graph blueG,redG;
   FILE *fp;
   for(T=0;T<T_MAX;++T){
      BOOK_POS.clear();
      LINE_NUMBER=0;
      char filename[100];
      char names[6][20]={"empty","b-path","br-path",
      "brr-path","brb-path","brrb-path"};
      sprintf(filename,"C4P%d_in_%d_moves_%d_verts.txt",N[T],E[T],V[T]);
      fp=fopen(filename,"w");
      for(int t2=0;t2<T2_MAX;++t2){
         int mxVertexIndex=fillGraph(blueG,redG,t2);
         if(mxVertexIndex>=V[T])
            continue;
         int result=construct(blueG,redG,mxVertexIndex+1);
         fprintf(fp,"rc(C4,P%d,%s,%d,%d)=%d\n",N[T],names[t2],V[T],E[T],result);
         printf("rc(C4,P%d,%s,%d,%d)=%d\n",N[T],names[t2],V[T],E[T],result);
         printf("unique/total positions analysed: ");
         printf("%d %d\n",NUM_OF_UNIQ_POS,NUM_OF_ALL_POS);
         ++LINE_NUMBER;
         if(result)
            printBook(blueG,redG,mxVertexIndex+1,
               BLUE_EDGES[t2].size()/2+RED_EDGES[t2].size()/2,fp);
      }
      fclose(fp);
   }
}

\end{lstlisting}
}
\end{document}